\documentclass[10pt]{amsart}

\usepackage{amsmath,amssymb,amscd,amsfonts,amsthm,mathrsfs,eucal,dsfont,verbatim}

\usepackage{lscape,enumerate}
\usepackage{color}
\usepackage{setspace}

\usepackage{stackengine}

\usepackage{hyperref}

\usepackage{marginnote}
\marginparwidth80pt 

\doublespacing

\newtheorem{theorem}{Theorem}
\newtheorem{assumption}{Assumption}

\newtheorem{condition}[theorem]{Condition}
\newtheorem{conjecture}[theorem]{Conjecture}
\newtheorem{corollary}[theorem]{Corollary}

\newtheorem{definition}[theorem]{Definition}
\newtheorem{example}[theorem]{Example}

\newtheorem{lemma}[theorem]{Lemma}

\newtheorem{proposition}[theorem]{Proposition}
\newtheorem{remark}[theorem]{Remark}

\setlength{\oddsidemargin}{0mm} \setlength{\parindent}{0pt}
\setlength{\textwidth}{163.2mm}
\setlength{\textheight}{220mm}

\newcommand{\real}{\mathds{R}}

\newcommand{\rd}{{\mathds{R}^d}}
\newcommand{\rdp}{{\mathds{R}_+^d}}

\newcommand{\supp}{\operatorname{supp}}
\newcommand{\Pois}{\operatorname{Pois}}
\newcommand{\Exp}{\operatorname{Exp}}

\newcommand{\Ee}{\mathds E}
\newcommand{\Pp}{\mathds P}
\newcommand{\I}{\mathds 1}
\newcommand{\Ff}{\mathcal{F}}
\newcommand{\Ll}{\mathcal{L}}
\newcommand{\Ss}{\mathcal{S}}
\newcommand{\WSs}{\mathcal{WS}}

\newcommand{\Bb}{\mathcal{B}}
\def\1{1\!\!\hbox{{\rm I}}}

\renewcommand{\Re}{\mathbb{R}}

\renewcommand{\Re}{\ensuremath{\operatorname{Re}}}
\renewcommand{\Im}{\ensuremath{\operatorname{Im}}}

\usepackage{color}


\usepackage{soul}

\usepackage[english]{babel}
\usepackage[left=3cm,right=3cm,top=2.5cm,bottom=3cm]{geometry}

\usepackage{amsmath,amssymb,amsthm,bbm,color,graphics,version}
\usepackage{mathrsfs}

\newcommand{\bearno}{\begin{eqnarray*}}
\newcommand{\enarno}{\end{eqnarray*}}

\setlength{\parindent}{0pt}

\newcommand{\ostar}{\mathbin{\mathpalette\make@circled\star}}

\title{Subexponential potential asymptotics with applications}

\author[V. Knopova]{Victoria Knopova  }
\address{TU Dresden, Zellescher Weg 12--14, Dresden, Germany}
\email{victoria.knopova@tu-dresden.de}
\author[Z. Palmowski]{Zbigniew Palmowski}
\address{ Faculty of Pure and Applied Mathematics,
Wroc\l aw University of Science and Technology,
Wyb. Wyspia\'nskiego 27, 50-370 Wroc\l aw, Poland}
\email{zbigniew.palmowski@pwr.edu.pl}

\thanks{This work is partially supported by Polish National Science Centre Grant No. 2018/29/B/ST1/00756, 2019-2022}

\date{\today}
\subjclass[2010]{
Primary 60K20;
Secondary 60K05, 91B30}

\begin{document}
\numberwithin{equation}{section}
\begin{abstract}
Let  $X_t^\sharp$  be a multivariate process of the form $X_t =Y_t - Z_t$, $X_0=x$, killed at some  terminal time  $T$, where $Y_t$ is a Markov process having only jumps of the length  smaller than $\delta$, and $Z_t$ is a compound Poisson process with jumps of the length bigger than $\delta$ for some fixed $\delta>0$.
Under the assumptions that the summands in $Z_t$ are sub-exponential, we investigate the asymptotic behaviour of the potential function  $u(x)= \Ee^x \int_0^\infty  \ell(X_s^\sharp)ds$. The case of heavy-tailed entries in $Z_t$ corresponds  to the case of ``big claims'' in insurance models and  is of practical interest.
The main approach is based on fact that $u(x)$ satisfies a certain renewal equation.

\vspace{3mm}

\noindent {\sc Keywords.} potential $\star$ renewal equation $\star$ subexponential distribution $\star$ applications $\star$
L\'evy processes

\end{abstract}

\maketitle

\pagestyle{myheadings} \markboth{\sc V.\ Knopova --- Z.\ Palmowski
} {\sc Subexponential potential asymptotics with applications}




\section{Introduction}\label{sec:iar}

 Let $(X_t)_{t\geq 0}$ be a c\'adl\'ag   strong Markov process with values in $\rd$, defined  on the probability space $(\Omega, \mathcal{F}, (\mathcal{F}_t)_{t\geq 0},(\Pp^x)_{x\in \rd})$, where $\Pp^x (X_0=x)=1$, $(\mathcal{F}_t)_{t\geq 0}$ is a
right continuous natural filtration satisfying usual conditions and $\mathcal{F}:=\sigma(\bigcup_{t\geq 0} \mathcal{F}_t)$.

In this note we study the behaviour of the potential  $u(x)$  of the process $X$, killed at some terminal time, when the   starting point $x\in \rd$   tends infinity in the sense that $x^0\to \infty$, where $x^0:=\min_{1\leq i\leq d} x_i$.   A particular case of this model is the behaviour of the ruin probability if the initial capital $x$ is big. In the case when the claims  are heavy-tailed, this probability can still be quite large.  The other example where the function $u(x)$ appears comes from the mathematical finance, where $u(x)$ describes the discounted utility of consumption; see
see \cite{AsmAlbr, Mi97, tomek}  and references therein.
We show that in some cases one can still calculate the asymptotic behaviour of $u(x)$ for large $x$, and discuss some practical examples.

Let us introduce some necessary notions and notation.
Assume
that $X$ is of the form
\begin{equation}\label{X1}
X_t := Y_t -Z_t,
\end{equation}
where $Y_t$ is a c\'adl\'ag, $\rd$-valued strong  Markov   process
with jumps of size strictly smaller than some  $\delta>0$, and $Z_t$ is an independent of $Y_t$ compound Poisson process with jumps of the size  bigger  than $\delta$.
That is,
\begin{equation}\label{CP}
Z_t:= \sum_{k=1}^{N_t}U_k,
\end{equation}
where $\{U_k\}$ is a sequence of i.i.d. random variables with a distribution function $F$,
\begin{equation}\label{bounddelta}
|U_k|\geq \delta,\qquad k\geq 1
\end{equation}
and $N_t$ is an independent Poisson process with intensity
$\lambda$.  In this set-up we have $\Pp^x (Y_0=x)=1$.

 Let $T$ be an $\Ff_t$-terminal time,  i.e.   for any $\Ff_t$-stopping  time $S$  it satisfies the relation
\begin{equation}\label{ter}
S+ T\circ \theta_S= T \quad \text{on $\{S<T\}$}.
\end{equation}
see \cite[$\S12$]{Sh88} or \cite[$\S22.1$]{Ba06}. 
 Among the examples of terminal times are

\begin{itemize}
\item The first exit time $\tau_D$ from  a Borel set  $D$: \,  $\tau_D:= \inf\{t>0: \, X_t \notin D\}$;

\item The exponential  (with  some parameter $\mu$)   random variable independent of $X$;

\item $T:= \inf\{t>0:\, \int_0^t f(X_s)\, ds\geq 1\}$, where $f$ is  a non-negative function;

\end{itemize}
see \cite{Sh88} for more examples.

  For $t\geq 0$ we define the killed process
\begin{equation}\label{kill}
X^\sharp_t:= \left\{
\begin{array}{lr}
X_{t}& t<T\\
\partial&t\geq T,
\end{array}
\right.
\end{equation}
where $\partial$
is a fixed cemetery  state. Note that the killed process $(X^\sharp_t, \Ff_t)$ is still strong Markov (cf. \cite[Prop. 22.1]{Ba06}).

\medskip

 Denote by  $\Bb_b(\rd)$ (resp.,  $\Bb_b^+(\rd)$)   the class of bounded (resp.,
bounded such that the infimum is nonegative on $\rd$ and it is strictly positive on $\rdp$)
Borel  functions on $\rd$.

We investigate the asymptotic properties of  the potential of $X^\sharp$:
\begin{equation}\label{u10}
u(x):= \Ee^x \int_0^\infty  \ell(X_s^\sharp)ds= \int_0^\infty  \Ee^x[\ell(X_s)\I_{T>s} ] ds, \quad x\in \rd,
\end{equation}
where    $\ell \in \Bb_b^+(\rd)$ and
through the paper we assume that
$\ell(\partial)=0$.
From this assumption $\ell(\partial)=0$ we have $u(\partial)=0$.
This function $u(x)$ is a particular example of a Gerber-Shiu function (see \cite{AsmAlbr}), which relates the ruin time and the penalty function, and appears often in the insurance mathematics when one needs to calculate the risk of a ruin.
We assume that the function $u(x)$ is well-defined and  bounded.
For example this is true if
$\Ee^x T=\int_0^\infty \Pp^x (T>s)ds<\infty$
because $\ell \in \Bb_b^+(\rd)$.

Having  appropriate   upper  and lower bounds\,    on the  transition probability density of $X_t$ it is possible to estimate $u(x)$. However, in some cases  one can get the  asymptotic behaviour of $u(x)$.
In fact, using the
strong Markov property,  one can show that  $u(x)$ satisfies the following  renewal type equation
\begin{equation}\label{ren1}
 u(x)= h (x)+   \int_\rd  u(x-z) \mathfrak{G}(x,dz),
 \end{equation}
 with some $h\in\mathcal{B}_b^+(\rd)$ and a (sub-)probability measure $\mathfrak{G}(x,dz)$ on $\rd$ identified explicitly. Note that under the assumptions made above this equation has a unique bounded solution (cf. Remark~\ref{rem2}).
 In the case when $Y_t$ has independent increments, this is a typical renewal equation, i.e. \eqref{ren1} becomes
 \begin{equation}\label{ren11}
 u(x)= h (x)+ \int_\rd u(x-z) G(dz),
 \end{equation}
for some (sub-)probability measure $G(dz)$.

In the case when $T$ is an independent killing, the measure  $\mathfrak{G}(x,dz)$ is a sub-probability measure with $\rho:=\mathfrak{G}(x,\rd)<1$ (note that $\rho$ does not depend on $x$, see \eqref{q01} below).  This makes it possible to give precisely the asymptotic behaviour of $u$   if $F$ is ($\rd$)-sub-exponential. The case when $F$ is sub-exponential corresponds to the situation when the impact of the claim is rather big, e.g., $U_i$ does not have finite variance. Such a situation appears in may insurance models, see, for example, Mikosch \cite{Mi97} as well as the monographs Asmussen \cite{As03}, Asmussen, Albrecher \cite{AsmAlbr}. We discuss several practical examples in Section~\ref{sec:app}.

The case when the time $T$ depends on the process might be different though.
We discuss this problem in  Example~\ref{risk}, where $X$ is a
one-dimensional risk process with $Y_t= at$, $a>0$, and $T$ is a ruin time, that is, the first time when the process gets below zero.
In this case we suggest to rewrite equation \eqref{ren11} in a different way in order to deduce the asymptotic of $u(x)$.

  Asymptotic behavior of the solution to the renewal equation of type \eqref{ren1}  has been studied quite a lot, see the monograph of  Feller \cite{Fe71} and  also \c{C}inlar \cite{Ci69}, Asmussen \cite{As03}. The behaviour of the solution heavily depends on the integrability of $h$ and the behaviour of  the tails of $G$. We refer to \cite{Fe71}  for the classical situation when the Cram\'er-Lundberg condition holds true, i.e. when there exists a solution $\alpha=\alpha(\rho, G)$ to the equation $\rho\int e^{\alpha x} G(dx)=1$, see also Stone \cite{St65}  for a moment condition. In the multi-dimensional case under the generalization of the Cram\'er-Lundberg  or   moment assumptions,  the asymptotic behaviour of the solution was studied in  Chung \cite{Ch52}, Doney \cite{Do66}, Nagaev \cite{Na79},    Carlsson, Wainger \cite{CW82,CW84},  H\"oglund \cite{H88}   (see also the reference therein for multi-dimensional renewal theorem).  In Chover, Nei, Wainger \cite{CNW73a,CNW73b} and Embrecht, Goldie \cite{EG80,EG82} the asymptotic behaviour of the tails of the measure $\sum_{j=1}^\infty c_j G^{*j}$ on $\real$  was investigated under the subexponentiality condition on the  tails of $G$, e.g.  when the moment condition is not necessarily satisfied. These results were further extended in the works of Cline \cite{Cl86,Cl87}, Cline and Resnik \cite{CR92},  Omey \cite{Om06},  Omey, Mallor, Santos \cite{OMS06},  Yin, Zhao \cite{YZ06},  see also the monographs of Embrechts, Kl\"uppelberg, Mikosh \cite{EKM97}, Foss,  Korshunov, Zahary \cite{FKZ13}.

The main tools used in this paper to derive the  above mentioned asymptotics of the potential $u(x)$ given in \eqref{u10} are based
on the properties of subexponential distributions in $\rd$ introduced and discussed in \cite{Om06, OMS06}.

The paper is organized as follows. In Section \ref{sec:renewal} we construct the renewal equation for the potential function $u$.
In Section \ref{sec:as} we give main results.
Some particular examples and extensions are described in Section \ref{sec:examples}. Finally, in Section \ref{sec:app} we
give some possible applications of proved results.

\medskip

 We use  the following notation. We write $f (x)\asymp g(x)$ when $C_1g(x)\leq f(x)\leq C_2g(x)$ for some constants $C_1, C_2\leq 0$. We write $y<x$ for $x,y\in \rd$,  if all components of $y$ are less than respective components of $x$.

\section{Renewal type  equation: general case}\label{sec:renewal}

Let $\zeta\sim \Exp(\lambda)$  be the moment of the first big jump of size $\geq \delta$  of the process $Z_t$.
Denote
\begin{equation}\label{h12}
h(x): = \Ee^x \int_0^\zeta \ell(X_s^\sharp)ds  =\int_0^\infty  e^{-\lambda r}   \Ee^x  [ \ell(Y_r)\I_{T>r} ]    dr.
\end{equation}
 For a Borel measurable set $A\subset \rd$
\begin{equation}\label{G110}
 \mathfrak{G}(x,A) : =\Ee^x [F(A+ Y_\zeta-x)\I_{T>\zeta}].
\end{equation}
 In the case when $Y_s$ is not a deterministic function of $s$, the kernel  $\mathfrak{G}(x,dz)$ can be rewritten in the following way:
\begin{equation}\label{G11}
\mathfrak{G}(x,dz) : = \int_0^\infty  \int_\rd  \lambda e^{-\lambda s}   F(dz+ w)\Pp^x(Y_s\in  dw+x,T>s)ds.
\end{equation}

In the theorem below  we derive the renewal (-type) equation for $u$.

For the kernels $H_i(x,dy)$, $i=1,2$ define the convolution
\begin{equation}\label{H12}
(H_1* H_2)(x,dz):= \int_\rd H_1(x-y, dz-y)H_2(x,dy).
\end{equation}
Note that if $H_i$ are of the type $H_i(x,dy)= h_i(y)dy$, $i=1,2$, then this convolution reduces to the ordinary convolution of the functions $h_1$ and $h_2$:
$$
(H_1* H_2)(x,dz):= \left(\int_\rd   h_1(z-y) h_2(y)dy \right)dz.
$$
Similarly, if only $H_1(x,dy)$ is  of the form $H_1(x,dy)= h_1(y)dy$, then by $(h_1* H_2)(z,x)$ we understand
$$
(h_1* H_2)(z,x)= \int_\rd   h_1(z-y) H_2(x,dy).
$$
\begin{theorem}\label{t-ren}
Assume that  the terminal time satisfies $\Ee T<\infty$.
Then the function $u(x)$  given by  \eqref{u10} is a solution to the equation \eqref{ren1}, and admits the representation
\begin{equation}\label{u1}
u(x)=  \Big(h* \sum_{n=0}^\infty \mathfrak{G}^{ *n }(x,\cdot) \Big)(x,x),
\end{equation}
where   $\mathfrak{G}^{* 0 }(x,dz)=\delta_0(dz)$ and  $\mathfrak{G}^{* n }(x,dz) := \int_\rd \mathfrak{G}^{* (n-1) }(x,dy)\mathfrak{G}(x-y,dz-y)$ for $n\geq 1$.
\end{theorem}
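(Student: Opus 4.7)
My strategy is a strong Markov decomposition at the time $\zeta\sim\Exp(\lambda)$ of the first big jump, which is independent of $Y$ and of $T$. Since $X_s=Y_s$ for $s<\zeta$, I would first split
\[
u(x)=\Ee^x\!\!\int_0^{\zeta\wedge T}\!\!\ell(Y_s)\,ds+\Ee^x\!\Bigl[\I_{T>\zeta}\!\!\int_\zeta^\infty\!\ell(X_s^\sharp)\,ds\Bigr];
\]
conditioning on $\zeta$ and applying Fubini turns the first summand directly into $h(x)$ as defined in \eqref{h12}.

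For the second summand I apply the strong Markov property of $X^\sharp$ at $\zeta$: on $\{T>\zeta\}$, by \eqref{ter} the shifted process restarts afresh from $X_\zeta=Y_\zeta-U_1$ with residual terminal time $T\circ\theta_\zeta$, so the inner tail integral has conditional expectation $u(X_\zeta)$. Using independence of $U_1\sim F$ from $(\zeta,Y_\zeta,T)$ together with the change of variable $z=U_1-(Y_\zeta-x)$ (so that $x-z=Y_\zeta-U_1=X_\zeta$), the second summand becomes $\int_\rd u(x-z)\,\mathfrak{G}(x,dz)$ with $\mathfrak{G}$ exactly as in \eqref{G110}. Combining the two pieces gives \eqref{ren1}.

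To obtain the series \eqref{u1} I iterate the renewal equation. Substituting \eqref{ren1} into its own right-hand side $n$ times and unwinding the nested integrals via the kernel convolution \eqref{H12} yields, by induction on $n$,
\[
u(x)=\sum_{k=0}^{n}\bigl(h*\mathfrak{G}^{*k}\bigr)(x,x)+\int_\rd u(x-y)\,\mathfrak{G}^{*(n+1)}(x,dy).
\]
The remainder is bounded by $\|u\|_\infty\,\mathfrak{G}^{*(n+1)}(x,\rd)$, and an inductive application of the strong Markov property at the successive big-jump epochs $\tau_n$ of $Z$ identifies $\mathfrak{G}^{*n}(x,\rd)=\Pp^x(T>\tau_n)$. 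Since $\tau_n\to\infty$ a.s.\ and $\Ee^x T<\infty$ forces $T<\infty$ a.s., this probability tends to $0$, and passing to the limit in the displayed identity delivers \eqref{u1}.

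The step I expect to be most delicate is the bookkeeping for the non-translation-invariant kernel $\mathfrak{G}(x,\cdot)$: the $x$-argument has to be tracked through every iteration so that the iterate lines up with the shifted variable $x-y$ required by \eqref{H12}. Equally, the probabilistic reading $\mathfrak{G}^{*n}(x,\rd)=\Pp^x(T>\tau_n)$ has to be verified by matching the recursion for $\mathfrak{G}^{*n}$ against the repeated strong Markov decomposition; routine as it looks, this is where one actually uses that $T$ can depend non-trivially on the trajectory of $X$.
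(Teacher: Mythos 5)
Your proposal is correct and follows essentially the same route as the paper: decompose at the first big-jump time $\zeta$, identify the pre-$\zeta$ piece as $h$ via Fubini, apply the strong Markov property together with the terminal-time identity \eqref{ter} at $\zeta$ to recover $\int u(x-z)\mathfrak{G}(x,dz)$, then iterate. You are in fact a bit more careful than the paper at the final step, making the remainder term explicit and identifying $\mathfrak{G}^{*n}(x,\rd)=\Pp^x(T>\tau_n)$ to justify its vanishing, where the paper simply writes ``iterating this equation.''
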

 If $Y_t$ has independent increments, then
\begin{equation}\label{G1}
\mathfrak{G}(x,dz) \equiv  G(dz)  =\int_0^\infty \lambda e^{-\lambda s}  \int_\rd F(dz+w) \Pp^0 (Y_s\in dw, T>s)ds
\end{equation}
and
\begin{equation}\label{u2}
u(x)=  \Big(h* \sum_{n=0}^\infty   G^{* n }\Big)(x,x).
\end{equation}

\begin{remark}\label{rem2}\rm
Recall that $u(x)$ is assumes to be bounded.
Then,  since  $\ell\in \mathcal{B}_b^+(\real)$, $u(x)$ is the unique bounded solution to \eqref{ren1}.
The proof of this fact is similar to that in Feller \cite[XI.1, Lem. 1]{Fe71}. Indeed, suppose that $v(x)$ is another bounded solution to \eqref{ren1}. Take $x\in \rd\backslash \partial$. Then $w(x):= u(x)-v(x)$ satisfies the equation
$$
w(x)= \big(w* \mathfrak{G}(x,\cdot )\big)(x,x) =\big(w* \mathfrak{G}^{* 2} (x,\cdot )\big)(x,x) = \dots= \big(w* \mathfrak{G}^{ * n} (x,\cdot )\big)(x,x) , \quad n\geq 1.
$$
 Note that for any   Borel measurable $A\subset \rd $  we have by \eqref{G110}  $ \mathfrak{G} (x,A)  < 1$. Then
$$
\max_{y\in A} |w(y)|\leq \max_{y\in A} |w(y)| \, \mathfrak{G}^{* n} (x,A) \to 0, \quad  \text{as $n\to \infty$}.
$$
Hence, $w(x)\equiv 0$ for $x\in A$ for any $A$ as above.
\end{remark}
   Before we proceed to the proof of Theorem~\ref{p2}, recall the definition of the strong Markov property, which is crucial for the proof. Recall (cf. \cite[\S2.3]{Ch82}) that the process $(X_t,\mathcal{F}_t)$ is called strong Markov, if for any optional time $S$
 and  any  continuous on $\overline{\rd}:=\rd\cup \{\infty\}$ real-valued function $f$ such that $\underset{x\in\overline{\rd}}{\sup}|f(x)|<\infty$,
 \begin{equation}\label{sm}
\Ee^x f(X_{S+r} |\Ff_{S}) = \Ee^{X_S} f(X_r),\qquad r\geq 0.
\end{equation}
Here $\Ff_{S}:= \{ A\in \Ff| \, A \cap \{S\leq t\}\in \Ff_{t+}\equiv\Ff_{t}\,\quad \forall t\geq 0\}$, and since $\Ff_t$ is assumed to be right-continuous, the notions of the stopping and optional times coincide.
Sometimes it is convenient to reformulate the strong Markov property in terms of the shift operator: let  $\theta_t: \Omega\to \Omega$ be such that
 for all $r>0$ $(X_r \circ \theta_t)(\omega) = X_{r+s}(\omega)$. This operator naturally extends  to $\theta_S$ for an optional time $S$ as  follows: $(X_r \circ \theta_S)(\omega) = X_{r+S}(\omega)$. Then one can rewrite \eqref{sm} as
  \begin{equation}\label{sm2}
\Ee^x [f(X_r\circ \theta_S) |\Ff_{S}] = \Pp^{X_S} f(X_r).
\end{equation}
and for any $Z\in \Ff$
  \begin{equation}\label{sm22}
\Ee^x [Z \circ \theta_S  |\Ff_{S}] = \Ee^{X_S}  Z \quad \Pp^x - \text{a.s. on $\{S<\infty\}$}.
\end{equation}
Definition \eqref{ter} of the terminal time  $T$ allows to use the  strong Markov   property  \eqref{sm22} in order to  ``separate'' the future of the process from its past.

\begin{proof}[Proof of Theorem~\ref{t-ren}]
Using the strong  Markov property  we get
$$
u(x)= \Ee^x \left[ \int_0^\zeta + \int_\zeta^\infty\right]  \ell(X_s^\sharp)ds := I_1+I_2.
$$
We estimate both terms  $I_1$ and $I_2$ separately.
Note that  $X_s^\sharp= Y_s^\sharp$ for $s\leq \zeta$. Therefore by the Fubini theorem we have
\begin{align*}
I_1 &= \Ee^x \int_0^\infty \lambda e^{-\lambda s} \int_0^s \ell(Y_r^\sharp)\, dr ds= \Ee^x \int_0^\infty \left( \int_r^\infty \lambda e^{-\lambda s} \, ds \right) \ell (Y_s^\sharp) \, dr\\
&= \Ee^x \int_0^\infty e^{-\lambda r} \ell(Y_r^\sharp)\, dr= \int_\rd \ell(w) \int_0^\infty e^{-\lambda r} \Pp^x (Y_r \in dw, T>r)dr\\
&=h(x).
\end{align*}
To transform $I_2$ we use that $T$ is the terminal time, the strong Markov  property \eqref{sm22} of $X$, and that $X_\zeta^\sharp= Y_\zeta^\sharp$. Let $Z= \int_0^\infty  \ell(X_r^\sharp) \,dr$. Then by the definition \eqref{ter} of  the terminal time we get
\begin{align*}
I_2&= \Ee^x \int_0^\infty \ell(X_r^\sharp\circ \theta_\zeta)dr=\Ee^x\left[ \Ee^x \left[\int_0^\infty  \ell(X_r^\sharp\circ \theta_\zeta) \,dr \Big| \Ff_\zeta \right]\right] \\
&= \Ee^x \left[ \Ee^x \left[Z \circ \theta_\zeta \Big| \Ff_\zeta \right] \right]= \Ee^x [\Ee^{X_\zeta^\sharp}  Z]= \Ee^x u(X_\zeta^\sharp)=\Ee^x u(Y_\zeta^\sharp) \\
 &= \int_\rd   \int_\rd   u(w-y)  \left[\int_0^\infty   \lambda e^{-\lambda s } F(dy) \Pp^x( Y_s \in dw, T>s) ds\right] \\
&=  \int_\rd \int_\rd    u(v-(y-x))\left[ \int_0^\infty  \lambda e^{-\lambda s }  F(dy) \Pp^x( Y_s \in dv+x, T>s) ds\right]\\
&= \int_\rd   u(x-z)  \left[\int_\rd   \int_0^\infty \lambda e^{-\lambda s }   F(dz+v) \Pp^x( Y_s \in dv+x, T>s) ds\right].
\end{align*}
where  in the third and the last lines from below we used  the Fubini theorem, and in the last two lines we made the change of variables $w\rightsquigarrow v+x$ and $y\rightsquigarrow v+z$, respectively. The integral in the square brackets in the last line is equal to $\mathfrak{G}(x,dz)$.
Thus  $u$ satisfies the renewal equation \eqref{ren1}. Iterating this equation we get \eqref{u1}.
\end{proof}

\section{Asymptotic behavior in case of independent killing}\label{sec:as}

In this section we show that under certain conditions one can get the asymptotic behaviour of $u(x)$ for large $x$.
 We begin with a small sub-section where we collect the  necessary auxiliary  notions.

\subsection{Sub-exponential  distributions on $\rdp$ and $\rd$}
 Recall the notation $\rdp= (0,\infty)^d$    and
$x^0=\min_{1\leq i\leq d} x_i<\infty$ for $x\in \rd$.

 \begin{definition}\label{defn}
\begin{enumerate}
\item  A function $f: \rdp\to [0,\infty)$ is called weakly long-tailed (notation: $f\in WL(\rdp)$) if
\begin{equation}\label{w1}
\lim_{x^0\to \infty} \frac{f(x-a)}{f(x)} =1 \quad \forall a>0.
\end{equation}

\item We say that a distribution  function $F$ on $\rdp$ is  \emph{weakly subexponential} (notation: $F\in WS(\rdp)$) if
\begin{equation}\label{subexp}
\lim_{x^0\to \infty}\frac{\overline{F^{*2}}\big(x\big)}{\overline{F}\big(x\big)} =2.
\end{equation}
\item We say that a distribution  function $F$ on $\rd$ is weakly subexponential (notation: $F\in WS(\rd)$) if it it long-tailed and \eqref{subexp} holds true.
\end{enumerate}
\end{definition}

\begin{remark}\rm
\begin{enumerate}
\item If $F\in WS(\rdp)$  then $\overline{F}$ is long-tailed.

\item  For $F\in WS(\rdp)$ we have (cf. \cite[Cor. 11]{Om06})
 \begin{equation*}
\lim_{x^0\to \infty}\frac{\overline{F^{*n}}\big(x\big)}{\overline{F}\big(x\big)} =n.
\end{equation*}

\item  Rewriting \cite[Lem. 2.17, p. 19]{FKZ13} in a multivariate set-up we  conclude
that any weakly subexponential distribution function is heavy-tailed, that is, for any $\varsigma >0$,
\begin{equation}\label{heavy}
\lim_{x^0\to \infty}\overline{F}(x)e^{\varsigma x}=+\infty.
\end{equation}
\item  We extended the definition of the whole-line subexponentialy from \cite[Def. 3.5]{FKZ13} to the multi-dimensional case. Note that even on the real line the assumption \eqref{subexp} along does not imply that the distribution is long-tailed, see \cite[$\S$ 3.2]{FKZ13}.
\end{enumerate}
\end{remark}

An important property  of a long-tailed function $f$  is the existence  of an insensitive function.
\begin{definition}\label{insen}
We say that a function $f$ is $\phi$-insensitive as $x^0\to\infty$, where  $\phi:\rdp\to \rdp$  is a non-negative increasing in each coordinate function,  if $\lim_{x^0\to \infty} \frac{f(x+\phi(x))}{f(x)}=1$.
\end{definition}
\begin{remark}\rm
 Observe that taking in Definition \ref{insen} $x-\phi(x)$ instead of $x$ and assuming  that $(x-\phi(x))^0 \rightarrow +\infty$
provides
that for $\phi$-insensitive function $f$ we have
$\lim_{x^0\to \infty} \frac{f(x-\phi(x))}{f(x)}=1$ as well.
\end{remark}
\begin{remark}\label{phi10}\rm
In the one-dimensional case if $f$ is long-tailed then such a function $\phi$ exists, and if $f$ is regularly varying,  then it is $\phi$-insensitive with respect to  any function $\phi(t)= o(t)$ as $t\to \infty$. The observation below shows that this property can be extended to the multi-dimensional case.

 Let $\phi(x)= (\phi_1(x_1), \dots, \phi_d(x_d))$, where $\phi_i : [0,\infty)\to [0,\infty)$, $1\leq i\leq d$,  are increasing functions, $\phi_i(t)= o(t)$ as $t\to \infty$. If $f$ is regularly varying in each component (and, hence,  long-tailed in each component),  then it is  $\phi(x)$-insensitive. Indeed,
\begin{equation}\label{w4}
\begin{split}
\lim_{x^0 \to \infty} \frac{f(x+\phi(x))}{f(x)} &= \lim_{x^0 \to \infty}\left\{ \frac{f(x_1+ \phi_1(x_1), \dots, x_d+ \phi_d(x_d))}{f(x_1+\phi_1(x_1), \dots, x_{d-1}+ \phi_{d-1}(x_{d-1}), x_d)} \cdot   \right.\\
&\quad \cdot \left.\frac{f(x_1+ \phi_1(x_1), \dots,x_{d-1}+ \phi_{d-1}(x_{d-1}),  x_d)}{f(x_1+\phi_1(x_1), \dots, x_{d-1}, x_d)}
  \dots  \frac{f(x_1+ \phi_1(x_1),  x_2,\dots, x_d)}{f(x_1, x_2, \dots,  x_d)}\right\}
 \\
&=1.
\end{split}
\end{equation}
\end{remark}
\begin{remark}\rm
Note that if a function is regularly varying in each component, it is long-tailed in the sense of definition \eqref{w1}, which follows from  \eqref{w4}. However, the  class of long-tailed functions is larger than that of multivariate regularly varying functions. There are several definitions of multivariate regular variation, see e.g. \cite{BDM02,  Om06}.
According to  \cite{Om06},
a function $f: \rdp\to [0,\infty)$ is called regularly varying, if for any $x\in \rdp$
\begin{equation}\label{w2}
\lim_{t\to \infty} \frac{f(tx-a)}{t^{-\kappa} r(t)} = \psi(x),
\end{equation}
where $\kappa \in \real$,  $r(\cdot)$ is slowly varying at infinity  and $\psi(\cdot)\geq 0$   (see \cite{BDM02} for  the definition of a multivariate regular variation of a distribution tail),  or   weakly regularly varying w.r.t. $h$, if for any $x,b\in \rdp$,
\begin{equation}\label{w22}
\lim_{b^0\to \infty} \frac{f(bx-a)}{h(b)} =\psi (x),
\end{equation}
where $b x:=(b_1x_1,\dots, b_dx_d)$.
Note that the function of the form $f(x_1,x_2)= c_1(1+x_1^{\alpha_1})^{-1} + c_2 (1+x_1^{\alpha_2})^{-1}$
($c_i, \alpha_i>0$, $i=1,2$) is regularly varying in each variable, but is not regularly varying in the sense of \eqref{w2} or \eqref{w22} unless $\alpha_1=\alpha_2$.
\end{remark}

\subsection{Asymptotic behaviour of $u(x)$}

 Let $T$ be an independent exponential killing  with parameter $\mu$.    We assume that the  law  $P_s(x,dw)$ of  $Y_s$  is absolutely  continuous with respect to the Lebesgue measure,  and denote the respective transition probability density function by  $\mathfrak{p}_s(x,w)$.

Rewrite $\mathfrak{G}(x,dz)$ as
\begin{equation}\label{q1}
\mathfrak{G}(x,dz)  = \int_\rd  F(dz+w)  q(x,w+x) dw,
\end{equation}
were
 \begin{equation}\label{additional}
q(x,w):= \int_0^\infty  \lambda e^{-\lambda s} \Pp(T>s)  \mathfrak{p}_s (x,w) ds.
\end{equation}
Observe that in the case of independent killing we  have (cf. \eqref{q1})
\begin{equation}\label{q01}
\sup_x \mathfrak{G}(x,\rd)= \int_0^\infty \lambda e^{-\lambda s} \Pp(T>s) \, ds = \rho:=\frac{\lambda}{\lambda+\mu}<1.
\end{equation}
 For $z\in \rd$, denote
$$
G_\rho(x,z):=\rho^{-1}  \mathfrak{G}(x,(-\infty, z]).  
$$

%
%
%
\begin{theorem}\label{p2}
 Assume that $T$ is an independent exponential killing  with parameter $\mu$
and $\ell(x)\to 0$ as $x^0
\to -\infty$. 
Let $F\in WS(\rdp)$  and suppose that  the function $q(x,w)$ defined in \eqref{q1} satisfies the estimate
\begin{equation}\label{q22}
q(x,w) \leq C e^{-\theta |w-x|}
\end{equation}
for some $\theta, C>0$.  Suppose that $\ell$  is long tailed and $\phi$-insensitive for some  $\phi$ such that  $\phi(x)^0\rightarrow +\infty$ and   $(x-\phi(x))^0\rightarrow +\infty$ as $x^0\to \infty$,  and   for any $c>0$
\begin{equation}\label{lp}
\lim_{x^0\to \infty}\min(\overline{F}(x),  \ell(x)) e^{c|\phi(x)|}= \infty
\end{equation}
 Suppose also that there exists $B\in [0,\infty]$ such that
 \begin{equation}\label{limit}
 \lim_{x^0\to\infty} \frac{\ell\big(x\big)}{\overline{F}(x) }=B.
 \end{equation}
 If $B=\infty$ we assume in addition that    $\ell(x)$ is regularly varying in each component.
Then
\begin{equation}\label{ub}
u(x) =
\begin{cases}
\frac{B \rho }{1-\rho}\overline{F}(x) (1+ o(1)), &  B \in (0,\infty),\\
o(1) \overline{F}(x), & B =0,\\
\frac{ \rho \ell(x)}{1-\rho}  (1+ o(1)), & B=\infty,
\end{cases}
\quad \text{as $x^0\to \infty.$}
\end{equation}
%
%
%
%
\end{theorem}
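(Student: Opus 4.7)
The plan is to plug the series representation
\[
u(x)=\sum_{n\ge0}\int_\rd h(x-y)\,\mathfrak{G}^{*n}(x,dy),
\]
from Theorem~\ref{t-ren} into a multivariate defective subexponential renewal analysis, splitting each summand at the scale $|\phi(x)|$ to separate the local contribution (driven by $h$) from the tail contribution (driven by $\mathfrak{G}^{*n}$). A preparatory step is to replace $h$ by an asymptotic in $\ell$: in the independent exponential killing setting $T\sim\Exp(\mu)$, identities \eqref{h12}, \eqref{additional} and \eqref{q01} give
\[
h(x)=\tfrac{1}{\lambda}\int_\rd\ell(w)\,q(x,w)\,dw,\qquad \int_\rd q(x,w)\,dw=\rho.
\]
Splitting the integral at $\{|w-x|\le|\phi(x)|\}$, the $\phi$-insensitivity of $\ell$ on the near region delivers the main term $\sim c_h\,\ell(x)$, while the exponential decay \eqref{q22} combined with \eqref{lp} (applied with exponent $\theta/2$) makes the far contribution $o(\ell(x))$. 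Hence $h(x)\sim c_h\,\ell(x)$, and $h$ inherits long-tailedness and $\phi$-insensitivity from $\ell$.

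The same splitting argument applied to $\mathfrak{G}(x,dz)=\int_\rd F(dz+v-x)\,q(x,v)\,dv$ identifies the tail of $\mathfrak{G}(x,\cdot)$ with that of $F$ up to the defect $\rho$, namely $\overline{\mathfrak{G}}(x,\cdot)(z)\sim\rho\,\overline F(z)$ uniformly in $x$, the increment $Y_\zeta-x$ contributing only exponential tails. An induction on $n$ using $\mathfrak{G}^{*n}(x,dy)=\int\mathfrak{G}^{*(n-1)}(x,dv)\,\mathfrak{G}(x-v,dy-v)$ and the multivariate subexponentiality of $F$ from Definition~\ref{defn} and \cite[Cor.~11]{Om06} then yields $\mathfrak{G}^{*n}(x,\rd)=\rho^n$ and $\overline{\mathfrak{G}^{*n}}(x,\cdot)(z)\sim n\rho^n\,\overline F(z)$, again uniformly in $x$. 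With these inputs the $n$-th summand of the series splits at $\{|y|\le|\phi(x)|\}$: the near part contributes $\rho^n h(x)(1+o(1))$ by $\phi$-insensitivity of $h$, and the far part contributes a quantity of order $n\rho^n\overline F(x)$ times a bounded functional of $h$. Summing $\sum_n\rho^n=(1-\rho)^{-1}$ and $\sum_n n\rho^n=\rho(1-\rho)^{-2}$ produces
\[
u(x)=\frac{h(x)}{1-\rho}(1+o(1))+\frac{\rho\,\overline F(x)}{(1-\rho)^2}\cdot(\text{constant built from }h)+o(\overline F(x)),
\]
and comparing $h\asymp\ell$ with $\overline F$ through $B$ sorts the three regimes of \eqref{ub}: the $h$-term dominates when $B=\infty$ and collapses, after inserting $h\sim c_h\ell$ and collecting the constants, to $\rho\ell(x)/(1-\rho)$; the two contributions are of the same order and merge into $B\rho\overline F(x)/(1-\rho)$ when $B\in(0,\infty)$; and both are $o(\overline F(x))$ when $B=0$, the hypothesis $\ell(x)\to0$ as $x^0\to-\infty$ together with \eqref{lp} killing the otherwise dangerous contribution of the far region on which $h(x-y)$ is evaluated at points with $(x-y)^0\to-\infty$.

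The main obstacle I expect is the uniform-in-$x$ propagation of the subexponential tail asymptotic $\overline{\mathfrak{G}^{*n}}(x,\cdot)\sim n\rho^n\overline F$ through the induction on $n$. The classical Chover-Ney-Wainger and Embrechts-Goldie theorems, and the multivariate extensions in \cite{Om06,OMS06}, deliver this for a translation-invariant defective distribution $G$, whereas here the convolution recursion reevaluates the kernel at the shifted base point $x-v$, so the multivariate long-tail asymptotic of $\overline F$ has to survive repeated shifts of the base point. Exploiting the exponential bound \eqref{q22} together with the $\phi$-insensitivity of $\overline F$ and $\ell$ is precisely what is required to upgrade the one-dimensional Kesten-type estimate to a uniform one on sets $\{x^0\ge R\}$; once this is in place, the three regimes in \eqref{ub} follow by routine accounting of the leading terms.
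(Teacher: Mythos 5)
Your overall strategy is the same as the paper's: series representation from Theorem~\ref{t-ren}, asymptotic identification $h(x)\sim \rho\,\ell(x)$ via the split at scale $|\phi(x)|$, uniform-in-$x$ propagation of the subexponential tail through the iterated convolutions $\mathfrak{G}^{*n}$ together with a Kesten-type bound (this is exactly Lemma~\ref{GF2} in the paper), and a final convolution split. You also correctly flag uniformity in the base point as the key technical obstacle, which is where the exponential estimate \eqref{q22} enters.

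There is, however, one imprecision that would need fixing in a careful execution, and it is precisely where the paper is more careful. Your intermediate display keeps a genuine far-range term
\[
\frac{\rho\,\overline F(x)}{(1-\rho)^2}\cdot(\text{constant built from }h),
\]
matching the classical defective-renewal asymptotic $u\sim\frac{h(x)}{1-\rho}+\frac{\rho\,h(-\infty)}{(1-\rho)^2}\overline G_\rho(x)$ that one would get for a translation-invariant kernel and a bounded $h$ with nonzero limit at $-\infty$. In the setting of Theorem~\ref{p2} this term is in fact $o(\overline F(x))$ \emph{in all three regimes}, not only when $B=0$: the region $\{x-\phi(x)<y<x+\phi(x)\}$ carries $\mathcal G$-mass $\overline F(x-\phi(x))-\overline F(x+\phi(x))=o(\overline F(x))$ by $\phi$-insensitivity, and on $\{y>x+\phi(x)\}$ one has $(x-y)^0\to-\infty$ so $h(x-y)\to 0$ by the hypothesis $\ell(x)\to 0$ as $x^0\to-\infty$ (cf.\ Remark~\ref{Remarksuffcond}). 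The paper's estimates $J_4, J_5$ make exactly this two-step split, and they are needed for $B\in(0,\infty)$ as well; without them your ``the two contributions are of the same order and merge'' does not by itself recover the constant $\tfrac{B\rho}{1-\rho}$, since a nonzero far-range contribution would change it. You invoke $\ell\to 0$ only under $B=0$; move that observation up front and apply it to all cases, and the remaining accounting is exactly the paper's. One small bookkeeping remark: since $q$ carries the factor $\lambda$ from \eqref{additional}, your identity $h(x)=\tfrac1\lambda\int \ell(w)q(x,w)\,dw$ is right, and together with $\int q(x,w)\,dw=\rho$ it is worth stating explicitly what the constant $c_h$ is before collecting the limit in \eqref{Rlim}.
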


\begin{remark}\rm
In the one-dimensional case   and $Y_t$ being a L\'evy process the proof follows from \cite[Cor. 3]{EGV79},  \cite[Thm. A.3.20]{EKM97}, or \cite[Cor. 3.16-3.19]{FKZ13}.
\end{remark}

\begin{remark}\rm
One can relax the condition of existence of  the limit \eqref{limit}  replacing it by the existence of $\underset{x^0\to\infty}{\limsup}$ and $\underset{x^0\to\infty}{\liminf}$,
and  the  assumption that $\ell$ is regularly varying  in each  component by
$$
0<c<\liminf_{x^0 \to \infty}\frac{\ell (x+w)}{\ell (x)}\leq \limsup_{x^0 \to \infty}\frac{\ell (x+w)}{\ell (x)}\leq C.
$$
Since this extension is straightforward, we  do not go into details.
\end{remark}
\begin{remark}\label{Remarksuffcond}\rm
By \eqref{q22} and the dominated convergence theorem, the assumption $\ell(x)\to 0$ as $x^0\to -\infty$ implies that $h(x)\to 0$ as $x^0\to -\infty$.
\end{remark}

 For the proof of Theorem~\ref{p2}   we need the following auxiliary lemmas.
\begin{lemma}\label{GF2}  Under the assumptions of Theorem~\ref{p2} we have
\begin{equation}\label{Gnew1}
\lim_{x^0\to \infty}\sup_z \frac{\overline{G_\rho^{* n}}(z,x) }{\overline{F}(x)} =\lim_{x^0\to \infty}\inf_z \frac{\overline{G_\rho^{* n}}(z,x) }{\overline{F}(x)} = \lim_{x^0\to \infty}\frac{\overline{G_\rho^{* n}}(z,x) }{\overline{F}(x)}= n, \quad n\geq 1,
\end{equation}
and there exists $C>0$ such that
\begin{equation}\label{Gnew2}
\lim_{x^0\to \infty} \sup_z  \frac{\overline{G^{* n}_\rho}(   z,x)}{\overline{F}(x)} \leq C n (1+\epsilon)^n.
\end{equation}
\end{lemma}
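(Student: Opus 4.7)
The plan is to prove Lemma~\ref{GF2} by induction on $n$, keeping track of uniformity in $z$ throughout. The key feature enabling this uniformity is that the exponential bound \eqref{q22} involves constants $C,\theta$ that do not depend on $z$, and that $\int_\rd q(z,w+z)\,dw=\rho$ is likewise $z$-independent.

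For the base case $n=1$, start from the identity
$$
\overline{G_\rho}(z,x)=\rho^{-1}\int_\rd \overline{F}(x+w)\,q(z,w+z)\,dw
$$
(obtained from \eqref{q1} by the substitution $u=v+w$) and split the integration at $|w|=|\phi(x)|$. On $\{|w|\leq |\phi(x)|\}$, since $F\in WS(\rdp)$ makes $\overline{F}$ long-tailed, the $\phi$-insensitivity of $\overline{F}$ (cf.~Remark~\ref{phi10}) gives $\overline{F}(x+w)=\overline{F}(x)(1+o(1))$ uniformly in such $w$. On $\{|w|>|\phi(x)|\}$, the bound \eqref{q22} yields a contribution of order $\int_{|w|>|\phi(x)|}e^{-\theta|w|}\,dw=O(e^{-\theta|\phi(x)|})$, which by \eqref{lp} is $o(\overline{F}(x))$. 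Combining with $\int_\rd q(z,w+z)\,dw=\rho$ yields $\overline{G_\rho}(z,x)=\overline{F}(x)(1+o(1))$ uniformly in $z$.

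For the inductive step, use the identity
$$
\overline{G_\rho^{*(n+1)}}(z,x)=\int_\rd \overline{G_\rho}(z-y,x-y)\,G_\rho^{*n}(z,dy),
$$
which follows from $G_\rho^{*(n+1)}(z,dv)=\int_\rd G_\rho^{*n}(z,dy)\,G_\rho(z-y,dv-y)$ by the change of variables $v'=v-y$. Partition the domain of $y$ into $\{|y|\leq |\phi(x)|\}$ and its complement. On the first piece, the base case applied with starting point $z-y$, together with long-tailedness of $\overline{F}$, gives $\overline{G_\rho}(z-y,x-y)=\overline{F}(x)(1+o(1))$ while the mass of $G_\rho^{*n}(z,\cdot)$ on this set tends to $1$, yielding a contribution of $\overline{F}(x)(1+o(1))$. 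On the complement, a further splitting analogous to the standard Kesten-style subexponential convolution argument in its multivariate form (as in \cite{Om06, OMS06}), combined with the inductive hypothesis applied to $\overline{G_\rho^{*n}}$, yields the remaining $n\,\overline{F}(x)(1+o(1))$, for a total of $(n+1)\overline{F}(x)(1+o(1))$.

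The Kesten-type bound \eqref{Gnew2} follows by a parallel induction: for any $\epsilon>0$ there is a threshold $x_0(\epsilon)$ such that for $x^0\geq x_0(\epsilon)$,
$$
\frac{\overline{G_\rho^{*(n+1)}}(z,x)}{\overline{F}(x)}\leq (1+\epsilon)\,\frac{\overline{G_\rho^{*n}}(z,x)}{\overline{F}(x)}+C_\epsilon,
$$
uniformly in $z$; iterating yields the desired $Cn(1+\epsilon)^n$ bound. The main obstacle is ensuring that all $o(1)$ and $O(1)$ error terms propagate uniformly in $z$ through the induction. This reduces to the observation that all splitting thresholds depend only on $x$ and on the already-proved base case $\overline{G_\rho}(z,x)/\overline{F}(x)\to 1$ (uniform in $z$), together with the $z$-uniformity of \eqref{q22}.
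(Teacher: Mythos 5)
Your plan is essentially the paper's: establish the $n=1$ case by splitting the integral for $\overline{G_\rho}(z,x)$ at the scale $\phi(x)$, use the exponential bound \eqref{q22} together with \eqref{lp} to kill the outer region uniformly in $z$, use $\phi$-insensitivity of $\overline{F}$ and $\int_\rd q(z,w+z)\,dw=\rho$ for the inner region, and then induct on $n$ by decomposing the convolution. The small cosmetic difference is that the paper does a three-way split for $n=1$ and a four-way split for $n=2$ (separating the ``middle'' region $\phi(x)<y\le x-\phi(x)$, where subexponentiality of $F$ is explicitly invoked, from the ``near $x$'' region $y>x-\phi(x)$), whereas you lump these into one ``complement'' and defer to a ``further splitting''; that further splitting is exactly what the paper spells out, so this is a presentation difference, not a gap.

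For the Kesten bound \eqref{Gnew2} the two proofs genuinely diverge in method. You propose a direct multivariate induction via a recursive inequality of the form $\overline{G_\rho^{*(n+1)}}(z,x)/\overline{F}(x)\le (1+\epsilon)\,\overline{G_\rho^{*n}}(z,x)/\overline{F}(x)+C_\epsilon$. This is the classical one-dimensional Kesten scheme transplanted to $\rd$; it can be made to work, but two details deserve attention. First, to apply the inductive hypothesis to the tail of $G_\rho^{*n}$ inside the integral you need the ``other'' factorization $\overline{G_\rho^{*(n+1)}}(z,x)=\int \overline{G_\rho^{*n}}(z-y,x-y)\,G_\rho(z,dy)$, which is legitimate by associativity of the kernel convolution \eqref{H12} but is not the identity you wrote down, so it should be stated and justified. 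Second, the recursive inequality is only asserted for $x^0\ge x_0(\epsilon)$, and one must check that the region $x^0<x_0(\epsilon)$ cannot spoil the supremum (this is the usual ``split the $\sup$'' step in Kesten's lemma). The paper sidesteps both points by bounding $\overline{G^{*n}_\rho}(z,x)\le\sum_{i=1}^d\overline{G^{*n}_{\rho,i}}(z,x)$ via a union bound and invoking the one-dimensional Kesten bound for each marginal $G^{*n}_{\rho,i}$; that route is shorter and keeps the $z$-dependence harmless because the marginal tails inherit uniformity from the $n=1$ analysis. Your route is also viable and has the minor advantage of giving $C(1+\epsilon)^n$ rather than $Cn(1+\epsilon)^n$, but it requires more bookkeeping.
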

\begin{proof}
The proof is similar to that of \cite[Thm. 3.34]{FKZ13}.
The idea is that the parametric dependence on $x$ is hidden in the function $q(x,x+w)$,  which decays much faster than $\overline{F}$ because of \eqref{heavy}.

Take $\phi$ such that $\overline{F}$ is $\phi$-insensitive
and $(x-\phi(x))^0\rightarrow +\infty$.

We split:
\begin{align*}
\overline{G}_\rho(z,x)&= \rho^{-1}\int_\rd \overline{F}(x+w) q(z,w+z)dw\\
&= \rho^{-1}\left( \int_{  w \leq -\phi(x)  }  +\int_{ |w| \leq |\phi(x)|  }  + \int_{w > \phi(x)} \right) \overline{F}(x+ w) q(z,w+z)dw \\
&:= K_1(z,x)+ K_2(z,x)+K_3(z,x).
\end{align*}
 We have by \eqref{q22}
\begin{equation}\label{K1}
\sup_z K_1(z,x) \leq \rho^{-1} \int_{w< - \phi(x)}  q(z,w+z)dw \leq  C_1 \int_{w< - \phi(x)}  e^{-\theta|w|} dw\leq C_2 e^{-\theta |\phi(x)|}
\end{equation}
and
\begin{equation}\label{K3}
\sup_z K_3(z,x)\leq  C_3 \int_{v \geq \phi(x)} e^{-\theta|v|}dv\leq C_4 e^{-
\theta |\phi(x)|}.
\end{equation}
From \eqref{lp} it follows that the left-hand sides of the above inequalities are  $o(\overline{F}(x))$ as $x^0 \to \infty$.

Note that  $K_2(z,x)\leq \sup_{|w|\leq \phi(x)} \overline{F}(x -w)$. Hence by Definition \ref{insen}, Remark
\ref{phi10}
and $\phi$-insensitivity of $\overline{F}$
we can conclude that

$$
\lim_{x^0\to \infty} \sup_z\frac{K_2(z,x) }{\overline{F}(x)}=\lim_{x^0\to \infty} \inf_z\frac{K_2(z,x) }{\overline{F}(x)} = 1,
$$

Thus, \eqref{Gnew1} holds for  $n=1$.  By the same argument we get that $\overline{G_\rho} (z,x)$ is long tailed as $x\to \infty$, uniformly in $z$.

Consider the second convolution $\overline{G^{* 2}_\rho} (z,x)$.
 By the definition of the convolution given in Theorem~\ref{t-ren} we have
\begin{align*}
\overline{G^{* 2}_\rho} (z,x)&= \left( \int_{ w< -\phi(x) }+ \int_{-\phi(x)\leq  w \leq \phi(x)}+ \int_{\phi(x)<  w \leq x-\phi(x)} + \int_{ w >x-\phi(x)}\right)
 \overline{G}_\rho(z- w,x-  w)G_\rho(z,d w)  \\
&:= K_{21}( z,x )+ K_{22}( z,x)+ K_{23}( z,x)+K_{24}( z,x ).
\end{align*}
Similarly to the argument for $K_{1}(z,x)$,  we get $\sup_z K_{21} (z,x)= o (\overline{F}(x))$ as $x^0\to \infty$.

 From the case when $n=1$ we know that    there exist $ 0<C_5<C_6<\infty$ such that
\begin{equation}\label{tail}
 C_5 \leq \liminf_{x^0 \to \infty} \frac{\overline{G}_\rho(z,x)}{\overline{F}(x)}\leq
\limsup_{x^0 \to \infty} \frac{\overline{G}_\rho(z,x)}{\overline{F}(x)}< C_6,
\end{equation}
uniformly in $z$.
 This allows to bound
$$
K_{23}(z,x)\leq  C_7 \int_{\phi(x)<x_1\leq x-\phi(x)} \overline{F}(x- w) F(d w),
$$
which is $o(\overline{F}(x))$ as $x^0\to \infty$ (see \cite[Thm. 3.7]{FKZ13} for the one-dimensional case,
the argument in the multi-dimensional one is the same).  By the same argument as for $K_2(z,x)$,
we  conclude that $K_{22}(z,x)= \overline{F}(x)(1+o(1))$, $x^0\to \infty$.    Finally,
 by $\phi$-insensitivity of $\overline{F}$, Remark \ref{phi10} and \eqref{tail} we have
$$
K_{24}(z,x)\leq \int_{x-\phi(x)<x_1} G_\rho(z,d w) = \overline{G_\rho}(z,x-\phi(x))=  \overline{F}(x) (1+o(1)),
$$
\begin{align*}
K_{24}(z,x)& \geq \int_{x_1\geq x+ \phi(x)} \overline{G}_\rho(z- w,x-  w)G_\rho(z,d w) \geq
\inf_y \overline{G}_\rho(y,-\phi(x)) \overline{G}_\rho(z,x+\phi(x))\\
&=
 \overline{F}(x) (1+o(1)).
\end{align*}

 Thus,  $K_{24}(z,x)= \overline{F}(x)(1+o(1))$.
For general $n$ the proof follows by induction  and the argument similar to that for  $n=2$.

  To prove Kesten's bound \eqref{Gnew2} we follow again \cite[Ch. 3.10]{FKZ13} and \cite[p. 5439]{Om06}.
Note that
$$
\overline{G^{* n}_\rho}(   z,x)\leq \sum_{i=1}^d \overline{G^{* n}_{\rho,i}}(   z, x),
$$
where $G^{* n}_{\rho,i}(z, x):= G^{* n}_{\rho}(z, \real \times \ldots \times (-\infty, x_i)\times \ldots \times \real)$ are marginals of $G^{* n}_{\rho}$.
Now generalizing \cite[Ch. 3.10]{FKZ13}
 to our set-up of $G^{* n}_{\rho,i}$
we can conclude that for each $\epsilon >0$ there exists
constant $ C_7$ such that
\[\overline{G^{* n}_\rho}(   z,x)\leq  C_7(1+\epsilon)^n \sum_{i=1}^d \overline{G}_{\rho,i}(   z, x),\]
implying
\[\overline{G^{* n}_\rho}(   z,x)\leq  C_7 d(1+\epsilon)^n \overline{G}_{\rho}(   z, x)\]
and we can use  \eqref{Gnew1}
to conclude \eqref{Gnew2}.

\end{proof}

\begin{proof}[Proof of Theorem~\ref{p2}]
\emph{1. Case  $B\in [0,\infty)$.}
Let
$$
\mathcal{G}(x,\cdot):= (1-\rho)\sum_{k=0}^\infty \rho^k G_\rho^{* k}(x, \cdot).
$$
Applying \eqref{Gnew2}  with $\epsilon< \frac{1-\rho}{\rho}$  we can pass to the limit
\begin{equation}\label{eqsecond}
\lim_{z^0\to \infty}\frac{\overline{\mathcal{G}}\big(x, z\big)}{\overline{F}\big(z\big)} =(1-\rho)\sum_{k=1}^\infty k \rho^k = \frac{\rho}{1-\rho}.
\end{equation}

We prove that
\begin{equation}\label{Rlim}
\begin{split}
 \lim_{x^0\to \infty}  \frac{h(x)}{\ell(x)} =  \lim_{x^0\to \infty} \frac{\int_0^\infty  e^{-\lambda r} \Pp(T>r) \Ee^{x} \ell(Y_r)dr}{\ell(x)}=  \lim_{x^0\to \infty} \frac{\int_\rd  \ell(x+w) q(x,w+x)dw }{\ell(x)}= \rho.
\end{split}
\end{equation}
We  use \eqref{q22} and that  $\ell \in \Bb_b^+(\rd)$ and  is long tailed. Indeed, by the same idea as that used in the proof of  Lemma~\ref{GF2},  we split
\begin{align*}
\int_\rd  \frac{\ell(x+w)q(x,w+x)}{\ell(x)} dw &= \left(\int_{|w|\leq |\phi(x)| } + \int_{|w|>|\phi(x)|} \right) \frac{\ell(x+w)q(x,w+x)}{\ell(x)} dw \\
&:= I_1(x)+ I_2(x),
\end{align*}
where the function $\phi(x)=(\phi_1(x),\dots,\phi_d(x))$, $\phi_i(x)>0$,  is such that  $\ell$ is $\phi$-insensitive.

For any $\epsilon=\epsilon (b^0)>0$ and large enough $x^0\geq b^0$ we get
$$
I_1(x) \leq (1+ \epsilon (b^0)) \int_{|w|\leq |\phi(x)| } q(x,w+x) dw
\leq (1+ \epsilon (b^0))\rho
$$
and similarly
$$
I_1(x) \geq (1-\epsilon (b^0)) \rho.
$$
Thus,  $\lim_{x^0\to \infty} I_1(x)=\rho$.
By \eqref{lp}  we get
$$
I_2(x) \leq C  \int_{|w|\geq |\phi(x)|} \frac{q(x,w+x)}{\ell(x)} dw \leq\frac{ Ce^{-c|\phi(x)|}}{\ell(x)}  \to 0 \quad \text{as $x^0\to \infty.$}
$$

Now we investigate the asymptotic behaviour of $\int_\rd h(x-y) \mathcal{G}(z,dy)$ (at the moment we assume that $z\in \rd$ is fixed; as we will see, it does not affect the asymptotic behaviour of the convolution). From now $\phi$ is such   that  both  $\ell$ and $\overline{F}$ and $\phi$-insensitive.
 Split the integral:
\begin{align*}
\int_\rd h(x-y) \mathcal{G}(z,dy)&= \left(\int_{y\leq -\phi(x)} + \int_{-\phi(x) \leq y\leq \phi(x)} + \int_{\phi(x)<y<x-\phi(x)}\right.\\
&\qquad \left.+ \int_{x-\phi(x)}^{x+\phi(x)}+ \int_{x+\phi(x)}^\infty \right) h(x-y) \mathcal{G}(z,dy)
\\
&:= J_1(z,x)+ J_2(z,x)+ J_3(z,x)+ J_4(z,x) + J_5(z,x).
\end{align*}
Observe that $B\in [0,\infty)$ implies that $\ell(x)$ is either comparable with the monotone function $\overline{F}(x)$, or $\ell(x)= o(\overline{F}(x))$ as $x^0\to \infty$.
By \eqref{Rlim}, this allows to estimate $J_1$ as
\begin{align*}
J_1(z,x)&\leq \sup_{w\geq \phi(x)} h(x+w)   \mathcal{G}(z,(-\infty,-\phi(x)])\leq C_1 \ell(x) \mathcal{G}(z, (-\infty,-\phi(x)]) \\
&= o(\ell(x))
= o (\overline{F}(x)), \quad x^0\to \infty,
\end{align*}
uniformly in $z$.  From \eqref{Rlim}  we have
\begin{equation}\label{J2}
J_2(z,x)= \rho \ell(x) (1+ o(1)), \quad x^0\to \infty,
\end{equation}
 uniformly in $z$.  Let us estimate $J_3(z,x)$.  Under the assumption $B\in [0,\infty)$ we have
\begin{equation}\label{J3}
J_3(z,x) \leq C_2 \int_{\phi(x)<y<x-\phi(x)}     \overline{F} (x-y) F(dy).
\end{equation}
Since $F$ is subexponential, the right-hand side of \eqref{J3}  is $o(\overline{F}(x))$ as $x^0\to \infty$; in the one-dimensional case this is stated in \cite[Thm. 3.7]{FKZ13}, the proof in the multi-dimensional case is literally the same.

For $J_4$ we have
\begin{equation}\label{J4}
\begin{split}
J_4(z,x) &\leq C_3 \left( F(x+\phi(x)) -F(x-\phi(x))\right) = C_3 \left( \overline{F}(x-\phi(x))- \overline{F}(x+\phi(x))\right)\\
&\leq   o (\overline{F}(x)),
\end{split}
\end{equation}
uniformly in $z$. Finally, for $J_5$ we have

\begin{align*}
J_5(z,x)& \leq  C_4 \sup_{w\leq -\phi(x)} h(w)    \overline{F}(x)= o(\overline{F}(x)).
\end{align*}
Thus, in the case $B\in [0,\infty)$ we get  the first and the second relations in \eqref{ub}.

2. \emph{Case  $B =\infty$.}  The argument for $J_1$ and $J_2$  remains the same. For $J_3$ we have
\begin{align*}
J_3(z,x) &\leq \ell(\phi(x)) (\overline{F}(\phi(x))- \overline{F}(x-\phi(x))) \leq C_5 \ell(\phi(x)) \overline{F}(\phi(x))\\
&\leq C_5 \ell^2(\phi(x)).
\end{align*}
By Remark~\ref{phi10} we can chose $\phi$ such that  $|\phi(x)|\asymp  |x| \ln^{-2} |x|$ as $x^0 \to \infty$.
Since in the case when $B=\infty$ the function $\ell$ is assumed to be regularly varying, it has a power decay, $J_3(z,x)= o (\ell(x))$, $x^0\to \infty$. By the same argument,
 $J_i(z,x)= o(\ell(x))$, $i=4,5$,  which proves the last relation in \eqref{ub}.

\qedhere
\end{proof}

In the next section we provide the examples in which \eqref{q22} is satisfied.
\begin{remark}
In the case when $Y$ is degenerate, e.g. $Y_t=x+at$, one can derive the asymptotic behaviour of $u(x)$ by a much more simple procedure.  For example, let
$d=1$, $T\sim \Exp(\mu)$,  $\mu>0$,  $Y_t=at$ with $a>0$, and $\ell(x)= \overline{F}(x)$, $x\geq 0$, and $\ell(x)=0$ for $x<0$.
This special type of the  function $\ell$ appears in the multivariate ruin problem, see also \eqref{GS2} below.
In this case $\rho=\frac{\lambda}{\lambda+\mu}$.  Then
$$
 \overline{G}(z) = \int_0^\infty \lambda e^{-(\lambda +\mu)t} \overline{F}(z+at)dt. 
 $$
Direct calculation gives 
$ \overline{G}(z)=
  \overline{F}(x)(1+ o(1))$ as $x^0\to \infty$, implying that
  $$
u(x)= \frac{\lambda}{\mu} \overline{F}(x)(1+o(1)), \quad x\to \infty.
$$
\end{remark}

\section{Examples}\label{sec:examples}

We begin with a simple example which illustrates Theorem~\ref{p2}.   Note that in the L\'evy case $\mathfrak{p}_s(x,w)$ depends on the difference $w-x$; in order to simplify the notation we  write in this case $ \mathfrak{p}_s(x,w) = p_s(w-x)$,
$$
q(w):=\int_0^\infty  \lambda e^{-\lambda s} \Pp(T>s)  p_s (w) ds
$$
 and \begin{equation}\label{q2}
G(dz) =
 \int_\rd F(dz+w) q(w)dw.
\end{equation}
We prove below a technical lemma, which provides the necessary estimate for $\mathfrak{p}_s(x,w)$ in the case when

\medskip

 a)  $Y_t = x +at  + Z^{\rm small}_t$, where $a\in \rd$  and $Z^{\rm small}$ is a L\'evy process with jumps size smaller than $\delta$, i.e. its characteristic exponent is of the form
\begin{equation}\label{psi1}
\psi^{\rm small}(\xi) := \int_{|u|\leq \delta} (1-e^{i\xi u} +  i \xi u) \nu(du),
\end{equation}
where $\nu$ is a L\'evy measure;

\medskip

b) $Y_t= x + at +V_t$, where $V_t$ is an Ornstein-Uhlenbeck process driven by $Z^{\rm small}_t$, i.e. $V_t$ satisfies the SDE
$$
dV_t = \vartheta V_t dt + d Z_t^{\rm small}.
$$
We assume that $\vartheta <0$ and  that $Z_t^{\rm small}$ in this model has only positive jumps.

Assume that for some $\alpha\in (0,2)$ and $c>0$
\begin{equation}\label{dens1}
\inf_{\ell,\jmath\in \mathbb{S}^d}  \int_{\ell   \cdot u>0} \big(1-\cos (R \jmath   \cdot u)\big) \nu(du)\geq c R^\alpha, \quad R\geq 1,
\end{equation}
where $\mathbb{S}^d$ is the sphere in $\rd$. Under this condition there exists (cf. \cite{K11}) the  transition probability density of $Y_t$ in both cases. Let
\begin{equation}
k_t(x):= \Ff\big( e^{- \psi_t(\cdot)}\big)(x),
\end{equation}
$$
\psi_t(\xi)= -it a \cdot\xi  +\int_0^t  \psi^{\rm small}( f(t,s)\xi)ds,
$$
where $f(t,s)= \I_{s\leq t} $ in the case a), and $f(t,s)= e^{(t-s)\vartheta}  \I_{0\leq s\leq t}  $
in case b). Note that since $\vartheta<0$  we have  $0<f(t,s)\leq 1$.
Note that in the case b) $\mathfrak{p}_t(0,x)=k_t(x)$.

Note that we always have
\begin{equation}\label{kt}
k_t(x) \leq (2\pi)^{-d/2} \int_\rd e^{- \int_0^t \mathrm{Re}  \psi^{\rm small}( f(t,s)\xi)ds } d\xi\leq  (2\pi)^{-d/2} \int_\rd e^{- c |\xi|^\alpha \int_0^t  |f(t,s)|^\alpha ds} d\xi,
\end{equation}
where in the second inequality we used \eqref{dens1}.
\begin{lemma}\label{dens-est}
 Suppose that \eqref{dens1} is satisfied. We have
\begin{equation}\label{pt1}
k_t(x)\leq
 \begin{cases}
  C e^{ - (1-\epsilon) \theta_\nu   |x-at| } &  \text{if} \quad  t>0, \, |x-at|\gg t\vee 1,\\
 C t^{-d/\alpha} & \text{if} \quad  t>0, \, x\in \rd,
 \end{cases}
\end{equation}
in case a), and

\begin{equation}\label{pt2}
k_t(x) \leq
\begin{cases}
C e^{- (1-\epsilon) \theta_\nu  |x-at|}, &  \text{if} \quad   t>0, \, x\in \rd,\,  |x-at|\gg 1,\\
C, & t>0, \quad  x\in \rd.
\end{cases}
\end{equation}
in case b). Here $\theta_\nu>0$ is a constant depending on the support of $\nu$ and $\epsilon>0$ is arbitrary small.
\end{lemma}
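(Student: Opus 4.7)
The proof splits into two tasks: a uniform-in-$x$ upper bound, which follows directly from \eqref{kt}, and the exponential decay in $|x-at|$, which requires a complex contour shift made possible by the compact support of $\nu$.

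For the uniform bound, it suffices to evaluate the integral on the right of \eqref{kt}. In case a), $f(t,s)\equiv 1$ and hence $\int_0^t|f(t,s)|^\alpha\,ds=t$; the substitution $\xi\mapsto t^{-1/\alpha}\xi$ then gives the $t^{-d/\alpha}$ bound, modulo a finite contribution from the region $|\xi|\leq 1$ where the lower bound $\operatorname{Re}\psi^{\rm small}(\xi)\geq c|\xi|^\alpha$ is unavailable but the integrand is in any event bounded by $1$. In case b), $f(t,s)=e^{(t-s)\vartheta}$ with $\vartheta<0$, so $\int_0^t|f|^\alpha\,ds=(1-e^{\alpha\vartheta t})/(-\alpha\vartheta)$, which is bounded below by a positive constant once $t$ is bounded away from zero, producing the claimed uniform constant.

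For the exponential decay the idea is that $\operatorname{supp}\nu\subset\{|u|\leq\delta\}$ makes $\psi^{\rm small}$ entire on $\comp^d$, so $e^{-\psi_t(\cdot)}$ is holomorphic on any strip $\rd+i\{|\eta|\leq\theta_\nu\}$ and, by the first step, decays at $|\xi|\to\infty$ uniformly in that strip. Starting from
\[
k_t(x)=(2\pi)^{-d/2}\int_{\rd}e^{-i\xi\cdot(x-at)}\exp\!\Big({-}\!\int_0^t\psi^{\rm small}(f(t,s)\xi)\,ds\Big)\,d\xi,
\]
I would set $e:=(x-at)/|x-at|$, $\eta:=-(1-\epsilon)\theta_\nu e$, and apply Cauchy's theorem on a large box to move the contour from $\rd$ to $\rd+i\eta$; the vertical boundary contributions vanish in the limit by the decay just noted. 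The shift extracts the prefactor $e^{\eta\cdot(x-at)}=e^{-(1-\epsilon)\theta_\nu|x-at|}$, reducing the claim to an upper bound on
\[
\int_{\rd}\exp\!\Big({-}\!\int_0^t\operatorname{Re}\psi^{\rm small}(f(t,s)(\xi+i\eta))\,ds\Big)\,d\xi.
\]

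The main obstacle is this remaining integral, and the key auxiliary estimate is a perturbation inequality of the form $\operatorname{Re}\psi^{\rm small}(\xi+i\eta)\geq\operatorname{Re}\psi^{\rm small}(\xi)-K(|\eta|,\delta)$. This is derived from the explicit representation $\operatorname{Re}\psi^{\rm small}(\xi+i\eta)=\int_{|u|\leq\delta}(1-e^{-\eta\cdot u}\cos(\xi\cdot u)-\eta\cdot u)\,\nu(du)$ by decomposing the integrand into $(1-\cos(\xi\cdot u))+\cos(\xi\cdot u)(1-e^{-\eta\cdot u})-\eta\cdot u$ and Taylor-expanding $1-e^{-\eta\cdot u}$ to second order in $\eta\cdot u$; the boundedness $|u|\leq\delta$ keeps all Taylor remainders controlled by $\int_{|u|\leq\delta}|u|^2\nu(du)<\infty$, and the admissible $\theta_\nu$ is determined by the support of $\nu$. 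Combined with \eqref{dens1}, this turns the displayed integral into a Gaussian-type integral multiplied by the harmless factor $e^{tK}$, which is absorbed into the exponential prefactor by the hypothesis $|x-at|\gg t\vee 1$, possibly after diminishing $\epsilon$. Case b) is handled by the same scheme, with the $f(t,s)$-dependent modifications from the first step and with the one-sided support of $\nu$ restricting the set of admissible shift directions.
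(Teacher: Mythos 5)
Your route is genuinely different from the paper's, and it contains a gap that I do not see how to close. The paper does not use a fixed contour shift: it shifts to the saddle point $i\xi(t,x)$ of the convex function $\xi\mapsto H(t,x,i\xi)$ (this convexity, and the existence of the saddle point, are imported from \cite{K11}), and then applies a Cauchy--Poincar\'e contour deformation. The essential payoff of that choice is that the saddle point grows, $|\xi(t,x)|\approx\theta_\nu\ln\bigl(|x-at|/t\bigr)$ in case a), so the prefactor one extracts is the \emph{superexponential} quantity $e^{-(1-\zeta)\theta_\nu|x-at|\ln(|x-at|/t)}$ (their \eqref{offdiag1}). That extra $\ln(|x-at|/t)$ is exactly what is used to kill the $t^{-d/\alpha}$ coming from the remaining Gaussian-type integral when $t\in(0,1]$. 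With a fixed shift of magnitude $(1-\epsilon)\theta_\nu$, your argument can only ever produce a bound of the form $C\,t^{-d/\alpha}\,e^{tK}\,e^{-(1-\epsilon)\theta_\nu|x-at|}$, and the hypothesis $|x-at|\gg t\vee1$ gives you nothing to absorb the $t^{-d/\alpha}$ with when $t\to 0$ (the constraint degenerates to $|x-at|\gg 1$ there). So the claimed estimate \eqref{pt1} is not established on the range $t\in(0,1]$ by your method; the same problem occurs in case b).

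A secondary issue is the perturbation inequality. As written, $\Re\psi^{\rm small}(\xi+i\eta)\geq\Re\psi^{\rm small}(\xi)-K(|\eta|,\delta)$ with $K$ independent of $\xi$ is too strong in general. From your own decomposition the cross term is
\[
\int_{|u|\leq\delta}(\cos(\xi\cdot u)-1)\,\eta\cdot u\,\nu(du),
\]
and when $\int_{|u|\leq\delta}|u|\,\nu(du)=\infty$ (which is the generic situation for $\alpha\geq 1$) this quantity grows with $|\xi|$. What is true, using $|\eta\cdot u|\leq\delta|\eta|$, is the \emph{relative} estimate
\[
\Re\psi^{\rm small}(\xi+i\eta)\ \geq\ \bigl(1-\delta|\eta|\bigr)\Re\psi^{\rm small}(\xi)\ -\ K_1(|\eta|,\delta),
\]
which only helps when $\delta|\eta|<1$; this ties the admissible shift magnitude (and hence the achievable $\theta_\nu$) to the support radius $\delta$ in a way you would need to make explicit, and it still leaves you with the $t^{-d/\alpha}$ factor above. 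To actually prove the lemma you would need either to let the shift magnitude grow logarithmically with $|x-at|/t$ (which defeats the relative perturbation estimate, since $\delta|\eta|$ then exceeds $1$), or to follow the paper's saddle-point argument, which handles this precisely by placing the contour where $\nabla_\xi H(t,x,i\xi)=0$ and bounding $\Re H(t,x,i\xi+\eta)$ via the restriction to $\{\xi\cdot u>0\}$ where $e^{f(t,s)\xi\cdot u}\geq 1$.

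Your treatment of the uniform (``diagonal'') bound via \eqref{kt} is fine and matches the paper. But note also that your appeal to Cauchy's theorem is not self-contained in your sketch: you would still need the convexity/saddle-point input from \cite{K11} (or an equivalent argument) to justify the deformation with the decay you cite, since holomorphy and decay along the shifted line is exactly what the paper invokes \cite{K11,KS12,KK11} for.
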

\begin{proof}  For simplicity, we assume that in case b) we have $\vartheta=-1$.

Without loss of generality assume that $x>0$.
Rewrite  $\mathfrak{p}_t(x)$ as
$$
k_t(x)= (2\pi)^{-d} \int_\rd e^{H(t,x,\xi)}d\xi,
$$
where
$$
H(t,x,\xi)= i\xi (x-at)- \psi_t(-\xi).
$$
It was shown in Knopova \cite[p. 38]{K11}, that the function $\xi\mapsto H(t,x,i\xi)$, $\xi\in \rd$,  is convex,   there exists  a solution to
$\nabla_\xi  H(t,x,i\xi)=0$, which we denote by $\xi=\xi(t,x)$, and  by non-degeneracy condition \eqref{dens1} we  have $x\cdot \xi>0$,   $|\xi(t,x)|\to\infty$, $|x|\to \infty$.  Further, by the same way as in \cite{K11}, see also Knopova, Schilling  \cite{KS12} and Knopova, Kulik \cite{KK11} (for the one-dimensional version), one can apply the Cauchy-Poincar\'e theorem and  get
\begin{equation}\label{CP1}
\begin{split}
k_t(x)&= (2\pi)^{-d}   \int_{i\xi (t,x)+ \rd} e^{H(t,x,z)} dz\\
&=  (2\pi)^{-d}   \int_{ \rd} e^{H(t,x,i\xi(t,x)+ \eta)} d\eta\\
&= (2\pi)^{-d}   \int_{ \rd} e^{\Re H(t,x,i\xi(t,x)+ \eta)} \cos \big( \Im H(t,x,i\xi(t,x)+ \eta)\big)d\eta\\
&\leq (2\pi)^{-d}   \int_{ \rd} e^{\Re H(t,x,i\xi(t,x)+ \eta)}d\eta.
\end{split}
\end{equation}
We have
\begin{align*}
\Re H(t,x,i\xi+ \eta) &= H(t,x,i\xi) -\int_0^t\int_{|u|\leq \delta} e^{f(t,s)\xi \cdot  u}  \big( 1-\cos( f(t,s) \eta \cdot u) \big)\nu(du)\,ds\\
&\leq H(t,x,i\xi) -\int_0^t \int_{|u|\leq \delta,\,\xi \cdot u>0}  \big( 1-\cos( f(t,s) \eta \cdot u) \big)\nu(du)\,ds\\
&\leq H(t,x,i\xi) - c |\eta|^\alpha\int_0^t |f(t,s)|^\alpha ds,
\end{align*}
where
$$
H(t,x,i\xi) = -(x-at)\cdot \xi + \int_0^t \int_{|u|\leq \delta} \Big( e^{f(t,s) \xi\cdot u} -1-f((t,s)\xi \cdot u) \Big) \nu(du) ds,
$$
and in the last inequality we used \eqref{dens1}.
Hence,
\begin{equation}\label{diag10}
k_t(x)\leq (2\pi)^{-d} e^{H(t,x,i\xi)} \int_\rd e^{- c |\eta|^\alpha\int_0^t |f(t,s)|^\alpha ds}d\eta.
\end{equation}
Now we estimate the function  $H(t,x,i\xi)$. Differentiating, we get
\begin{align*}
\partial_\xi H(t,x,i\xi)& = -(x-at)\cdot e_\xi + \int_0^t\int_{|u|\leq \delta} \Big( e^{f(t,s) \xi\cdot u} -1\Big)f(t,s) u \cdot e_\xi  \nu(du) ds\\
&=: -(x-at)\cdot e_\xi + I(t,x,\xi),
\end{align*}
where $e_\xi = \xi /|\xi|$.
For large $|\xi|$ we can estimate $I(t,x,\xi)$ as follows:
\begin{align*}
I(t,x,\xi)&\leq C_1 \int_0^t \int_{|u|\leq \delta} |f(t,s) u|^2 e^{f(t,s) \xi \cdot u} \nu(du)\,ds\\
&\leq C_1 e^{\delta |\xi| \max_{s\in[0,t]}f(t,s)} \int_0^t f^2(t,s)  ds
\end{align*}
for some constants $C_1$.
For the lower bound we get for
\begin{align*}
I(t,x,\xi)&\geq C_2 \int^t_{(1-\epsilon_0)t} \int_{ |u|\leq \delta, \, \xi \cdot u > |\xi|(\delta-\epsilon)} |f(t,s) u|^2 e^{f(t,s) \xi \cdot u} \nu(du)\,ds\\
&\geq C_3 e^{(\delta-\epsilon) |\xi| \min_{s\in[(1-\epsilon_0)t,t]}f(t,s)} \int_{(1-\epsilon_0)t}^t  f^2(t,s)  ds,
\end{align*}
where $C_2,C_3>0$ are some constant,
$\epsilon_0,\epsilon\in (0,1)$.
 Thus, we get
\begin{equation}\label{I1}
C_3 t e^{ (\delta-\epsilon) |\xi|} \leq I(t,x,\xi)\leq  C_1 t e^{\delta |\xi|}
\end{equation}
in case a), and
\begin{equation}\label{I2}
C_3 e^{ (\delta-\epsilon) e^{\epsilon_0 }   |\xi|} \leq I(t,x,\xi)\leq  C_1   e^{\delta |\xi|}
\end{equation}
in case b).

In particular, this estimate implies that there exists $c_0>0$ such that $(x-at)\cdot e_\xi\geq c_0$, i.e., $e_\xi$ is directed towards $x-at$, in particular, cannot be orthogonal to $x-at$.

From now we treat each case separately.

\emph{Case a)}.
If  $|x-at|/t\to \infty$,  we get for any $\zeta\in (0,1)$
$$
(1-\zeta)\theta_\nu  \ln \big( |x-at|/t\big) (1+o(1))\leq  \xi(t,x)\leq (1+\zeta) \theta_\nu  \ln \big( |x-at|/t\big) (1+o(1)),
$$
where $\theta_\nu$ is the constant which depends on the support $\supp \nu$.
Therefore,
\begin{equation}\label{HLevy}
H(t,x,i\xi(t,x))\leq - (1-\zeta) \theta_\nu  |x-at|\ln \big( |x-at|/t\big) + C_4,
\end{equation}
for $ t>0$, $|x-at|\gg t$ and some constant $C_4$.

It remains to estimate the integral term  in \eqref{diag10}.
We have
$\int_0^t f^\alpha (t,s) ds= t $, hence
\begin{equation}\label{diag1}
\int_\rd e^{- c |\eta|^\alpha\int_0^t  f^\alpha (t,s) ds}d\eta=  C_5 t^{-d/\alpha}.
\end{equation}
Thus, we get
\begin{equation}\label{offdiag1}
k_t(x)\leq C_6 t^{-d/\alpha} e^{ - (1-\zeta) \theta_\nu |x-at| \ln \big(|x-at|/t\big)}.
\end{equation}
For $t\geq 1$ the first estimate in \eqref{pt1} follows from \eqref{offdiag1}, because $t^{-d/\alpha}\leq 1$.

 Consider now  the case $t\in (0,1]$.
For $t\in (0,1]$ and  $|x|\gg 1$  we have  for $K$ big enough  and some constant $C_7$
\begin{align*}
 e^{ - \zeta(1-\zeta) \theta_\nu |x-at| \ln \big(|x-at|/t\big)}&\leq  e^{ - \zeta (1-\zeta) \theta_\nu (|x|-|a|)| \ln \big(|x-at|/t\big)}\leq C_7 e^{-K \ln \big(|x-at|/t\big)}.
\end{align*}
Without loss of generality, assume that   $K>d/\alpha$. Then
\begin{align*}
k_t(x)&\leq C_8 t^{-d/\alpha} e^{ - (1-\zeta)^2 \theta_\nu |x-at| \ln \big(|x-at|/t\big)- \zeta(1-\zeta) \theta_\nu |x-at| \ln \big(|x-at|/t\big)}\\
&\leq C_9 t^{-d/\alpha} \left(\frac{t}{|x-at|}\right)^K e^{ - (1-\zeta)^2 \theta_\nu|x-at| }  \\
&\leq C_{10} e^{- (1-\zeta)^3 \theta_\nu|x-at| },
\end{align*}
which proves the first estimate  in \eqref{pt1} by taking
$1-\epsilon =(1-\zeta)^3$.
For the third estimate in \eqref{pt1} observe that $H(t,x,i\xi)\leq 0$. Then the bound follows from  \eqref{diag1}.

\emph{Case b).} If $|x-at|\to \infty$ we get for any $\zeta\in (0,1)$
$$
(1-\zeta)\theta_\nu  e^{ \epsilon_0} \ln |x-at| (1+o(1))\leq  \xi(t,x)\leq (1+\zeta) \theta_\nu   \ln |x-at| (1+o(1)),
$$
where $\theta_\nu$ is the constant which depends on the support $\supp \nu$.

Now we estimate the right-hand side in \eqref{diag1} in case  b). Since   $\int_0^t  f^\alpha(t,s)  ds= \alpha^{-1}  (1- e^{-\alpha t}) $, we get
\begin{equation}\label{diag2}
\int_\rd e^{- c |\eta|^\alpha\int_0^t  f^\alpha(t,s) ds}d\eta\leq C_{11}
\end{equation}
 for some constant $C_{11}$.
Thus,  there exist $C_{12}>0$ and $\epsilon\in (0,1)$ such that for $x\in \rd$ and $t>0$ satisfying  $|x-at|\gg 1 $ we get
\begin{align*}
k_t(x)
&\leq C_{12} e^{-(1-\epsilon)  \theta_\nu|x-at| },
\end{align*}
which proves \eqref{pt2} for large $|x-at|$. Finally, boundedness of $\kappa_t(x)$ follows from \eqref{kt} and that in case b) we have $c\leq \int_0^t f^\alpha(t,s) ds \leq C$ for all $t>0$.
\end{proof}
\begin{remark}\rm The same  estimates  can be shown also for the model $Y_t =  x+ at+  \sigma B_t +  Z^{\rm small}_t$.

b) Note that $\epsilon>0$\,  in the exponent in \eqref{pt1} and \eqref{pt2} can be chosen arbitrary close to  $0$, i.e. is in a sense sharp.
\end{remark}
\begin{lemma}\label{qest}
Let $Y$ be as in case a). There exist $C>0$ and $\epsilon\in (0,1)$ such that  the  estimate
$$
q(x)\leq C  e^{-(1-\epsilon)\theta_q |x|}, \quad |x|\gg 1,
$$
holds true, where
\begin{equation}\label{thetaq}\theta_q= \theta_\nu \wedge \lambda/ (2|a|).
\end{equation}
\end{lemma}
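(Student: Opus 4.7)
My plan is to bound $\Pp(T>s)$ by one and split the time integral defining $q$ at the balance point $s_\ast:=|x|/(2|a|)$, where the decay of the transition density (Lemma~\ref{dens-est}) hands off to the decay of the killing factor $e^{-\lambda s}$. On $(s_\ast,\infty)$ I would use the size-insensitive bound $p_s(x)\leq Cs^{-d/\alpha}$; since $|x|\gg 1$ makes $s_\ast\geq 1$ and hence $s^{-d/\alpha}\leq 1$, this contribution is at most $C\int_{s_\ast}^\infty\lambda e^{-\lambda s}\,ds=Ce^{-\lambda|x|/(2|a|)}$, which supplies the $\lambda/(2|a|)$ half of $\theta_q$.

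On the head $s\in(0,s_\ast]$ the triangle inequality gives $|x-as|\geq|x|-|a|s\geq|x|/2$, so I would plug in the first estimate $p_s(x)\leq Ce^{-(1-\epsilon)\theta_\nu|x-as|}$ of Lemma~\ref{dens-est}, obtaining
$$
\int_0^{s_\ast}\lambda e^{-\lambda s}p_s(x)\,ds\leq C\lambda\,e^{-(1-\epsilon)\theta_\nu|x|}\int_0^{s_\ast}e^{\beta s}\,ds,\qquad \beta:=(1-\epsilon)\theta_\nu|a|-\lambda.
$$
If $\beta\leq 0$ the inner integral is bounded and the result reduces to $Ce^{-(1-\epsilon)\theta_\nu|x|}\leq Ce^{-(1-\epsilon)\theta_q|x|}$, since $\theta_q\leq\theta_\nu$. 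If $\beta>0$ the inner integral is at most $e^{\beta s_\ast}/\beta$, and with $s_\ast=|x|/(2|a|)$ the combined exponent simplifies to $-[(1-\epsilon)\theta_\nu/2+\lambda/(2|a|)]|x|$; since $\beta>0$ forces $\lambda<(1-\epsilon)\theta_\nu|a|$, so $\theta_q=\lambda/(2|a|)$, the trivial inequality $(1-\epsilon)\theta_\nu/2+\lambda/(2|a|)\geq(1-\epsilon)\lambda/(2|a|)$ delivers the claimed rate (absorbing any polynomial prefactor into a slightly worsened $\epsilon'>\epsilon$ for $|x|$ large).

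The principal technical obstacle is the side condition $|x-as|\gg s\vee 1$ attached to the first estimate of Lemma~\ref{dens-est}: on $s\in(0,1]$ this is automatic because $|x-as|\geq|x|/2\gg 1$, but on $s\in(1,s_\ast]$ the ratio $|x-as|/s$ is only bounded below by $|a|$, which need not be large. To close this gap I would employ the sharper logarithmic form $p_s(x)\leq Cs^{-d/\alpha}e^{-(1-\epsilon)\theta_\nu|x-as|\ln(|x-as|/s)}$ extracted from the proof of Lemma~\ref{dens-est}, valid whenever $s\geq 1$ and $|x-as|\geq s$; since $s_\ast\geq 1$ and $t^{-d/\alpha}\leq 1$ on $[1,\infty)$, this keeps the preceding integration intact up to harmless polynomial factors and does not degrade the final rate $(1-\epsilon)\theta_q$.
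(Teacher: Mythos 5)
Your overall strategy matches the paper's: split the time integral at a threshold $s_\ast\asymp|x|/|a|$, combine the density estimate of Lemma~\ref{dens-est} with the triangle inequality $|x-as|\geq|x|-|a|s$ on the head, use the killing factor $e^{-\lambda s}$ (with $k_s(x)\leq Cs^{-d/\alpha}\leq C$ for $s\geq 1$) on the tail, and treat the two sign cases of $(1-\epsilon)\theta_\nu|a|-\lambda$ separately. The paper carves the split as $\{t:|x|>|a|t+(t\vee 1)\}$ versus its complement (and then replaces the complement by $\{t>|x|/(2|a|)\}$), whereas you cut cleanly at $s_\ast=|x|/(2|a|)$; the arithmetic is identical and yields the same $\theta_q$.

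The one weak link is your patch for the side condition $|x-as|\gg s\vee 1$ on the sub-range $s\in(1,s_\ast]$. The logarithmic form $k_s(x)\leq Cs^{-d/\alpha}\exp\bigl(-(1-\epsilon)\theta_\nu|x-as|\ln(|x-as|/s)\bigr)$ is itself derived in the proof of Lemma~\ref{dens-est} under the asymptotic regime $|x-at|/t\to\infty$, so it is not ``valid whenever $s\geq 1$ and $|x-as|\geq s$'' as you claim. Concretely, when $|a|\leq 1$ the ratio $|x-as|/s$ can drop to $|a|\leq 1$ as $s\to s_\ast$, so $\ln(|x-as|/s)\leq 0$ and the exponential factor provides no decay (or even grows) precisely where you invoked it. To be fair, the paper's own proof silently carries the same difficulty: its $I_1$ region only guarantees $|x-at|>t\vee 1$, not $\gg$. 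A clean repair --- implicitly covered by the $(1-\epsilon)$ slack built into $\theta_q$ --- is to shrink the head to $s\leq\eta|x|/|a|$ for a fixed small $\eta>0$: there $|x-as|/(s\vee 1)\geq(1-\eta)/\eta$ is uniformly large, the exponential bound applies legitimately, and the excised range $(\eta|x|/|a|,\,s_\ast]$ gets absorbed into the tail, worsening the rate only by a factor that $(1-\epsilon)$ already accommodates.
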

\begin{proof}
We use Lemma~\ref{dens-est}. 
We have
\begin{align*}
q(x)=  \left(\int_{\{t: |x|>|a|t + (t\vee 1)\}} + \int_{\{t: |x|\leq |a|t + (t\vee 1)\}} \right) e^{-\lambda t} \kappa_t(x) 
 dt
:= I_1 + I_2.
\end{align*}
For $I_1$ we use the triangle inequality
\begin{align*}
I_1 &\leq C_1 e^{-(1-\epsilon)\theta_\nu |x| }  \int_{\{t: |x|>|a|t\}} e^{-(\lambda- (1-\epsilon) \theta_\nu |a|) t}  dt \\
 &\leq C_1 e^{-(1-\epsilon)\theta_\nu |x| }
  \begin{cases}
  C_2 & \text{if} \quad  \lambda> (1-\epsilon) \theta_\nu |a|,\\
  C_2 e^{\frac{(1-\epsilon) \theta_\nu |a|-\lambda}{ |a|} |x|}   & \text{if} \quad  \lambda<  (1-\epsilon) \theta_\nu |a|,
  \end{cases}
  \end{align*}
  where $C_1,C_2>0$ are certain constants and we exclude the equality case by choosing appropriate $\epsilon>0$.   Hence,
  $$
  I_1\leq
  \begin{cases}
  C_3e^{-(1-\epsilon)\theta_\nu |x| }, & \lambda> (1-\epsilon) \theta_\nu |a|,\\
  C_3 e^{- \frac{\lambda}{|a|} |x|},   & \lambda<  (1-\epsilon) \theta_\nu |a|
  \end{cases}
$$
for some $C_3>0$.

For $I_2$ we get, since $|x|\gg 1$,
\begin{align*}
I_2 &\leq C_4 \int_{\{ t: \, t >  |x|/(2|a|) \} }   t^{-d/\alpha}  \lambda e^{-\lambda t}dt \leq C_5 e^{-\frac{(1-\epsilon) \lambda}{ 2|a|} |x|}.
\end{align*}
Thus,  there exists $\epsilon>0$ and $C>0$ such that
$$
I_k \leq C e^{- (1-\epsilon)(\theta_\nu \wedge \lambda/(2|a|)) |x|}, \quad k=1,2.
$$
This completes the proof.
\end{proof}

Consider now the estimate in case b). Recall that we assumed that  the process $Y$ has only positive jumps. This means, in particular, that in the transition probability density $\mathfrak{p}_t(x,y)$ we only have $y\geq x$ (in the coordinate sense). Under this assumption it is possible to show that
$q(x,y)$ (cf. \eqref{q1}) decays exponentially fast as $|y-x|\to \infty$.
\begin{lemma}\label{qest2}
 In the case b) there exist $C>0$ and $\epsilon\in (0,1)$ such that
$$
q(x,y)\leq C  e^{-(1-\epsilon)\theta_q |y-x|}, \quad |y-x|\gg 1,
$$
where $\theta_q$ is the same as in Lemma~\ref{qest}.
\end{lemma}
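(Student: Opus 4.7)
The plan is to follow the same structure as the proof of Lemma~\ref{qest}, substituting the bounds from case b) of Lemma~\ref{dens-est} for those of case a). Since $T$ is an independent exponential killing with parameter $\mu$, $\Pp(T>s)\leq 1$, so
$$q(x,y)\leq \int_0^\infty \lambda e^{-\lambda s}\,\mathfrak{p}_s(x,y)\,ds.$$
Because $Y_t = x+at+V_t$ with $V_0=0$, the computation $\psi_t(\xi)=-ita\cdot\xi+\int_0^t \psi^{\rm small}(f(t,s)\xi)\,ds$ together with Fourier inversion identifies $k_t$ as the density of $at+V_t$, and hence $\mathfrak{p}_s(x,y)=k_s(y-x)$. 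Writing $w:=y-x$, it therefore suffices to show
$$\int_0^\infty \lambda e^{-\lambda s}\,k_s(w)\,ds \leq C\,e^{-(1-\epsilon)\theta_q|w|},\qquad |w|\gg 1.$$

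Next I split the integral at $s_0:=|w|/(2|a|)$. On $[0,s_0]$ the triangle inequality yields $|w-as|\geq |w|-|a|s\geq |w|/2\gg 1$, which places us in the regime where the first estimate in \eqref{pt2} applies, giving $k_s(w)\leq Ce^{-(1-\epsilon)\theta_\nu(|w|-|a|s)}$. Plugging this in,
$$I_1 := \int_0^{s_0}\lambda e^{-\lambda s}k_s(w)\,ds \leq C\lambda e^{-(1-\epsilon)\theta_\nu|w|}\int_0^{s_0}e^{-(\lambda-(1-\epsilon)\theta_\nu|a|)s}\,ds,$$
and the same two-case analysis as in the proof of Lemma~\ref{qest} (depending on the sign of $\lambda-(1-\epsilon)\theta_\nu|a|$ and using $s_0=|w|/(2|a|)$) yields
$$I_1\leq C'e^{-(1-\epsilon)(\theta_\nu\wedge \lambda/(2|a|))|w|}=C'e^{-(1-\epsilon)\theta_q|w|}.$$

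On $[s_0,\infty)$ I use the second (uniform) bound in \eqref{pt2}, $k_s(w)\leq C$, so that
$$I_2:=\int_{s_0}^\infty \lambda e^{-\lambda s}k_s(w)\,ds \leq Ce^{-\lambda s_0}=Ce^{-\lambda|w|/(2|a|)}\leq Ce^{-(1-\epsilon)\theta_q|w|}.$$
Combining $I_1$ and $I_2$ and adjusting $\epsilon$ produces the claim with $\theta_q=\theta_\nu\wedge \lambda/(2|a|)$. The argument is structurally parallel to Lemma~\ref{qest}; the simplification relative to case a) is that the uniform bound in case b) is merely $C$ rather than $Ct^{-d/\alpha}$, but this is harmless because $I_2$ is integrated only over the tail $s\geq s_0$. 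I do not anticipate a real obstacle; the only delicate point is the sign-of-$(\lambda-(1-\epsilon)\theta_\nu|a|)$ split in $I_1$, which is handled verbatim as in Lemma~\ref{qest}.
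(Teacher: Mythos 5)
The central step in your argument is the identification $\mathfrak{p}_s(x,y) = k_s(y-x)$, i.e.\ that the transition density in case b) is translation invariant. This is where your proposal diverges from the paper's proof, and it is not what the paper uses: in the paper's argument the Ornstein--Uhlenbeck structure is taken to affect the \emph{initial condition}, via the representation $Y_t = e^{-t}\big(x + \int_0^t e^s\, dZ_s^{\rm small}\big)$, so that the density bound reads $\mathfrak{p}_t(x,y) \leq C e^{-(1-\epsilon)\theta_\nu |y - x e^{-t} - at|}$. The dependence on $x$ and $y$ is therefore \emph{not} through $y-x$ alone. This is precisely the point of case b): it is the non-L\'evy example in which $q(x,w)$ genuinely depends on both arguments, so that verifying the two-variable condition \eqref{q22} requires an actual argument. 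Under your translation-invariant reading, Lemma~\ref{qest2} would collapse to Lemma~\ref{qest} with a relabeling, and the surrounding discussion about positive jumps forcing $y \geq x$ would have no role.

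Because of this, the key step you are missing is the one the paper flags explicitly just before the lemma: $Z^{\rm small}$ has only positive jumps, hence $y > x \geq 0$, hence $y - e^{-t}x \geq y - x > 0$. This inequality converts the $t$-dependent quantity $|y - e^{-t}x - at|$ into a bound $\geq |y-x| - |a|t$ that depends only on $|y-x|$, after which the computation is indeed parallel to Lemma~\ref{qest} (your split at $s_0 = |y-x|/(2|a|)$ and the sign-of-$(\lambda - (1-\epsilon)\theta_\nu|a|)$ dichotomy both survive). So the integral manipulations in your proposal are sound, but they are applied to the wrong density; you need to insert the reduction $|y - e^{-t}x - at| \geq (y-x) - |a|t$, which uses the positive-jump and nonnegativity assumptions, before the rest of your estimate goes through.
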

\begin{proof}
From the representation  $Y_t = e^{-t}(x+ \int_0^t e^s dZ_s^{\rm small})$ and \eqref{pt2} we get
$$
\mathfrak{p}_t(x,y)\leq
C e^{- (1-\epsilon) \theta_\nu  |y-xe^{-t}-at|},\quad  t>0, \, x,y>0,  |y-xe^{-t}-at|\gg 1.
$$
Similarly to the proof of Lemma~\ref{qest},  we have
 \begin{align*}
q(x,y)&\leq C_1 \int_{\{t: |y-x|> |a| t\}} e^{-\lambda t} e^{-(1-\epsilon)\theta_\nu |y-e^{-t}x-at|} dt + C_2\int_{\{t: |y-x|\leq  |a| t\}}  e^{-\lambda t} dt\\
&:= I_1 + I_2.
\end{align*}
Since $y>x$, we have $|y-e^{-t}x|= y- e^{-t}x> y-x>0$ and therefore
\begin{align*}
M_1 & \leq C_1 \int_{\{t: |y-x|> |a| t\}} e^{-(\lambda- (1-\epsilon) \theta_\nu |y-e^{-t}x|- (1-\epsilon) \theta_\nu |a|) t}  dt \\
&\leq C_1 e^{-(1-\epsilon)\theta_\nu |y-x| }  \int_{\{t: |y-x|> |a| t\}} e^{-(\lambda- (1-\epsilon) \theta_\nu |a|) t}  dt \\
 &\leq C_1 e^{-(1-\epsilon)\theta_\nu |y-x| }
  \begin{cases}
  C_3, & \lambda> (1-\epsilon) \theta_\nu |a|,\\
  C_3 e^{\frac{(1-\epsilon) \theta_\nu |a|-\lambda}{ |a|} |y-x|},   & \lambda<  (1-\epsilon) \theta_\nu |a|.
  \end{cases}
  \end{align*}
Hence,
  $$
  I_1\leq
  \begin{cases}
  Ce^{-(1-\epsilon)\theta_\nu |y-x| } & \text{if} \quad
\lambda> (1-\epsilon) \theta_\nu |a|,\\
  C e^{-  \frac{\lambda}{|a|}  |y-x|}   & \text{if} \quad \lambda<  (1-\epsilon) \theta_\nu |a|.
  \end{cases}
$$
Clearly,
\begin{align*}
I_2 &\leq  C  e^{-\frac{(1-\epsilon) \lambda}{|a|} |y-x|},
\end{align*}
which completes the proof.
\end{proof}

\begin{remark}\rm
Direct calculation shows  that estimate \eqref{q22} is not satisfied for an Ornstein-Uhlenbeck process driven by a Brownian motion, unless $\lambda>\theta$.
\end{remark}

Consider an example in $\real^2$, which illustrates how one can get the asymptotic of $u(x)$ along curves.
\begin{example}\label{example2d}\rm
Let $d=2$ and $x=(x_1(t), x_2(t))$. We assume that   $x_i=x_i(t)\to \infty$ as $t\to \infty$ in such a way that $x(t)\in \mathbb{R}^2\backslash \partial$.    Suppose that  $F\in WS(\mathds{R}^2_+)$ and factorizes as $F(x)= F_1(x_1) F_2(x_2)$.
Assume that the assumptions of Theorem~\ref{p2} are satisfied with $B\in (0,\infty)$.
Since
$$
\overline{F}(x)=1- F_1(x_1)F_2(x_2)= \overline{F}_1(x_1) F(x_2) + \overline{F}_2(x_2),
$$
we get in the case of Theorem~\ref{p2}  and $B\in (0,\infty)$ that
$$
u\big(x\big) = \frac{B \rho }{1-\rho} \overline{F} (x)(1+ o(1))=   \frac{B \rho }{1-\rho}\Big(\overline{F_1}\big(x_1(t)\big)+ \overline{F_2}\big(x_2(t)\big)\Big) (1+ o(1)) \quad \text{as $t\to \infty.$}
$$
Thus,  taking different (admissible)  $x_i(t)$, $i=1,2$, we can achieve different effects in the asymptotic of $u(x)$. For example, assume that for $z\geq 1$
$$
\overline{F}_i(z)=c_i z^{-1-\alpha_i}, \quad i=1,2,
$$
where  $c_i, \alpha_i>0$, are suitable constants.
Direct calculation  shows that $F_i(x)$ are subexponential and the relations in \eqref{subexp} hold true. Note that the behaviour of $F$  depends on the constants $\alpha_i$ and on the coordinates of $x$. We have:
\begin{equation}
\overline{F}(x(t))=
 \begin{cases}
 \frac{c_1(1+o(1))}{(x_1(t))^{1+\alpha_1}} & \text{if} \quad \lim_{t\to\infty} \frac{x_1^{1+\alpha_1}(t)}{x_2^{1+\alpha_2}(t)}=0,\\
 \frac{c_2(1+o(1))}{(x_2(t))^{1+\alpha_2}} & \text{if} \quad \lim_{t\to\infty} \frac{x_1^{1+\alpha_1}(t)}{x_2^{1+\alpha_2}(t)}=\infty,\\
(1+o(1))\Big( \frac{c_1}{x_1(t)^{1+\alpha_1}}+\frac{c_2 }{x_2(t)^{1+\alpha_2}}\Big) & \text{if} \quad \lim_{t\to\infty} \frac{x_1^{1+\alpha_1}(t)}{x_2^{1+\alpha_2}(t)}=c\in (0,\infty).\\
 \end{cases}
 \end{equation}
Taking, for example,
 $x=(t,t)$ or $x=(t,t^2)$ we get the behaviour of  $u(x)$ along the line $y=x$, or along the parabola $y=x^2$, respectively.
\end{example}

\begin{example}\label{example2db}\rm
Let $d=2$ and suppose that the   generic jump is of the form $U=(\varrho \Xi, (1-\varrho)\Xi)$, where  $\varrho \in (0,1)$ and the distribution function $H$ of the random variable $\Xi$ is subexponential on $[0,\infty)$.
Then
$\overline F (x)= \overline{H}\left(\frac{x_1}{\varrho}\wedge \frac{x_2}{1-\varrho}\right)$,
$F\in WS (\mathds{R}_+^2)$, and

$$
\overline{F}(x(t))=
\left\{\begin{array}{lr}
\overline{H}\left(\frac{x_1(t)}{\varrho}\right)(1+o(1))& \text{if} \quad  \lim_{t\rightarrow\infty} \frac{x_1(t)(1-\rho)}{x_2(t)\varrho}\leq 1\\
\overline{H}\left(\frac{x_2(t)}{1-\varrho}\right)(1+o(1))& \text{if} \quad  \lim_{t\rightarrow \infty} \frac{x_1(t)(1-\rho)}{x_2(t)\varrho}> 1.
\end{array}
\right.
$$
Thus, one can get the asymptotic behaviour of $u(x)$ provided that the assumptions of Theorem~\ref{p2} are satisfied with $B\in (0,\infty)$.
\end{example}

\begin{example}\rm
Let $x\in \rd$,  $T\sim \Exp(\mu)$, independent of $X$, and $Y$ is as in the cases a)  or b). Recall that in this case $\rho=\frac{\lambda}{\lambda+ \mu}$.  Let $\ell(x) = \I_{|x|\leq r}$.
Then
$$
u(x)= \int_0^\infty \Pp^x ( |X^\sharp_t|\leq r)dt= \int_0^\infty \mu e^{-\mu t} \Pp^x (|X_t|\leq r)dt.
$$
Then the assumptions of Theorem \ref{p2} are satisfied with $B=0$;
therefore,
$$
u(x) = o(1)\overline{F}(x)  \quad \text{as $x^0 \to \infty$.} 
$$

If $\ell(x)= \I_{\min x_i\geq r}$
then
$$
u(x)= \int_0^\infty \Pp^x ( X^\sharp_t\geq r)dt= \int_0^\infty \mu e^{-\mu t} \Pp^x (\min_{1\leq i\leq d} X_t^i \geq r)dt.
$$
Then we are in the situation of Theorem~\ref{p2}   with $B=\infty$, hence,
$$
u(x)=\frac{\lambda}{\mu}  (1+o(1)), \quad \text{as $x^0 \to \infty.$}
$$
\end{example}

\medskip

\begin{example}\label{risk}\rm
At the end of this section we consider a simple example when $T$ is not independent of $X$.
We consider a simple well-known one-dimension case $X_t = x+ at - Z_t$ with  $a>0$,
$\Ee U_1=\mu$, $N_t \sim \Pois(\lambda)$
and $T=\inf\{t\geq 0: X_t<0\}$ being a ruin time.
We put
\begin{equation}\label{ruinell}
\ell(x)=  \lambda \overline{F} (x).
\end{equation}
   Then the renewal equation \eqref{ren1} for  $u(x)$ is
\begin{equation}\label{u1b}
u(x)= \int_0^\infty \lambda e^{-\lambda t} \overline{F} (x+at)dt    + \int_0^\infty \lambda e^{-\lambda t} \int_0^{x+at} u(x+at-y) F(dy) \, dt.
\end{equation}
Changing the variables we get
\begin{align*}
u(x)= h(x) + \int_{-\infty}^x u(x-z) G(dz)
\end{align*}
with
$h(x)= \int_0^\infty \lambda e^{-\lambda t} \overline{F} (x+at)dt $ and
$$
G(dz)=\I_{z\geq 0} \int_0^\infty \lambda e^{-\lambda t}  F(dz+ at)  \, dt  + \I_{z<0}  \int_{-z/a}^\infty \lambda e^{-\lambda t}  F(dz+ at)  \, dt.
$$
Note that $\supp G = \real$, and $G(\real)=1$, hence, the result of Theorem~\ref{p2} cannot be applied directly.
In this situation the well-known approach is more suitable; below  we recall this approach.

Taking
\begin{equation}\label{vudef}
v(x)= 1- u(x)
\end{equation}
and starting from \eqref{u1b}
we end up with
\begin{align*}
&v(x)=
-\int_0^\infty \lambda e^{-\lambda t} \overline{F} (x+at)dt \\&\qquad\qquad   +
\int_0^\infty \lambda e^{-\lambda t} \left(\int_0^{x+at}F(dy) +\overline{F} (x+at)-
\int_0^{x+at} u(x+at-y) F(dy) \right)\, dt\\&\qquad = \int_0^\infty \lambda e^{-\lambda t} \int_0^{x+at} v(x+at-y) F(dy) \, dt,
\end{align*}
where we used equality $\int_0^{x+at}F(dy) +\overline{F} (x+at)=1$.
Hence $v$ satisfies the equation
\begin{equation}\label{v10}
v(x)= \int_0^\infty \lambda e^{-\lambda t} \int_0^{x+at} v(x+at-y) F(dy) \, dt,
\end{equation}
which coincides with \cite[(1.19)]{EKM97}.
On the other hand, \eqref{v10} can be written in the form \cite[(1.22)]{EKM97}
\begin{equation}\label{eqforv}
v(x) = \frac{\theta}{1+\theta} + \frac{1}{1+\theta}  \int_0^x v(x-y) \,F_I (dy),
\end{equation}
where $F_I (x) =\frac{1}{\mu} \int_0^x \overline{F}(y)dy$ is the integrated tail of $F$, $\theta: = \frac{a}{\lambda \mu} -1$. Equivalently,
\begin{equation}\label{v20}
u(x) = \rho \overline{F}_I(x)+ \rho \int_0^x u(x-y)  F_I(dy),
\end{equation}
where $\rho= \frac{1}{1+\theta}$.
Note that we can apply to the above equation  Theorem~\ref{p2} with
$F_I$ instead of $F$.  Note that this model is defined for $x>0$, i.e. we restrict $h(x)= \rho \overline{F}_I(x)$ to  $[0,\infty)$.
Under the  stronger assumption that $F_I$ is subexponential,
the asymptotic behaviour of the solution to this equation is well known (cf. \cite[Thm. 2.1, p. 302]{AsmAlbr}):
\begin{equation}\label{as1druin}
u(x) = \frac{\rho}{1-\rho} \overline{F}_I(x) (1+o(1)), \quad x\to \infty.
\end{equation}
\end{example}

\section{Applications}\label{sec:app}

Properties of potentials of type \eqref{u10} are important in many applied probability models, such as
branching processes, queueing theory, insurance ruin theory, reliability theory, demography,
etc. \\
{\bf Renewal equation \eqref{ren11} and the one-dimensional random walk.}
Most of applications concern the renewal function
$u(x)=\Ee^0L_x$
where $L$ is a renewal process with the distribution $G$ of inter-arrival times.
In this case the renewal equation \eqref{ren11} holds true with
$ h(x)=G(x)$. For example, in {\it demographic models} (such as modelling Geiger counter or in a branching theory)
$L_x$ corresponds to the number of organisms/particles alive at time $x$; see for example \cite{Willmotetal, YZ06}.

Other applications come from
the distribution of all-time supremum $S=\max_{n\geq 1} S_n$ of a one-dimensional
random walk $S_n=\sum_{k=1}^n \eta_k$ (and $S_0=0$)  with
$\eta_k\geq 0$ and
\begin{equation}\label{eta}
\rho=\int_0^\infty\Pp(\eta_1 \in dz)<1.
\end{equation}
In this case the  function $v(x)=\Pp^0(S\leq x)$  for $x\geq 0$  satisfies the equation (cf. \cite[Prop. 2.9, p. 149]{As03})
\[v(x)=1-\rho+\rho\int_0^xv(x-y)G_\rho(dy)\]
with $G(dy)=\Pp(\eta_1\in dy)$ and  the proper distribution function $G_\rho(dy)=G(dy)/\rho$. Hence $u(x)=1-v(x)=\Pp^0(S>x)$ satisfies  the
equation
\[
u(x)=\rho\overline{G}_\rho(x)+\rho\int_0^xu(x-y)G_\rho(dy),
\]
which is \eqref{ren11}
with $h(x)= \rho\overline{G}_\rho(x)$.
As it is proved in \cite[Thm. 2.2, p. 224]{As03}, in case of a general non-defective random walk with negative drift,
one can take the first ascending ladder
height for the distribution of $\eta_1$.
In particular, in the case of a single server queue $GI|GI|1$ the quantity $S$ corresponds to the {\it steady-state workload}; see \cite[eq. (1.5), p. 268]{As03}.
Then $\eta_k$ are $k$th ascending ladder height of the
random walk $\sum_{k=1}^n\chi_k$ for $\chi_k$ being
the difference between successive i.i.d. service times $U_k$ and i.i.d. inter-arrival times $E_k$. In the case
of $M|G|1$ queue we have $\chi_k=U_k-E_k$, where $E_k$ is exponentially distributed with intensity, say, $\lambda$.
Then
\begin{equation}\label{ladder}
G(dx)=\mathbb{P}(\eta_1\in dx)= \lambda \mathbb{P}(U_1\leq x)dx;
\end{equation}
see \cite[Thm. 5.7, p. 237]{As03}.
Note that by \eqref{eta} in this case
$\rho=\lambda \Ee U_1$.
By  duality  (see e.g. \cite[Thm. 4.2, p. 261]{As03}), in the risk theory the tail distribution of $S$ corresponds to the ruin probability of a classical Cram\'er-Lundberg
process defined by
\begin{equation}\label{riskproc}
X_t=x+t-Z_t,
\end{equation}
where $Z_t=\sum_{i=1}^{N_t}U_k$
is
given in \eqref{CP} and describes a cumulative amount of the claims up to time $t$,
$N_t$ is a Poisson process with intensity $\lambda$ and $U_k$ is the claim size arrived at the $k$th epoch of the Poisson process $N$.
Here $x$ describes the initial capital of the insurance company and $a$ is a premium intensity.
Indeed, taking $\chi_k=U_k-E_k$ with exponentially distributed $E_k$ with intensity $\lambda$
one can prove that for the ruin time
$$
T=\inf\{t\geq 0: X_t<0\}
$$
we have
\begin{equation}\label{duality}
u(x)=\Pp^x(T<+\infty)=\Pp^0(S>x).
\end{equation}
 Note that by duality the service times $U_k$ in $GI|GI|1$ queue correspond to the claim sizes
and therefore we use the same letter to denote them.
Similarly, inter-arrival times $E_k$ in single server queue correspond to the times between
Poisson epochs of the process $N_t$ in risk process \eqref{riskproc}.
Assume  that $\delta=0$ in \eqref{bounddelta} and that $Y_s=s$, that is, $a=1$ in Example \ref{risk}.
If the net profit condition $\rho<1$ hold  true  (under which the above ruin probability is strictly less than one),
we can conclude that the ruin probability satisfies equation \eqref{v20}.
Hence from \cite[Thm. 5.2, p. 106]{FKZ13}, under the assumption that $F_I\in \Ss$ (which is equivalent to the assumption that $G\in \Ss$)
we derive the asymptotic   of the ruin probability given in \eqref{as1druin}.


{\bf Multivariate risk process.} There is an obvious need to understand the heavy-tailed asymptotic for the ruin probability
in the multi-dimensional set-up.
Consider  the multivariate risk process $X_t=(X_t^1, \ldots, X_t^d)$ with possibly dependent components $X_t^i$
describing the reserves of the $i$th insurance company which  covers incoming claims.
We assume that the  claims arrive simultaneously to all companies, that is,
$X_t$ is a multivariate L\'evy risk process with $a\in \mathbb{R}^d$,
$Z_t$ is a compound Poison process given in \eqref{CP} with the arrival intensity $\lambda$ and the generic claim size $U \in \mathbb{R}^d$.
We assume that $\delta=0$ and $Y_s=as$.
Each company can have  its own claims process as well. Indeed, to do so it suffices to
merge the separate independent arrival Poisson processes with the simultaneous arrival process (hence constructing new Poisson arrival process)
and allow the claim size to have atoms in one of  the axes directions.
Consider now the following
ruin time
$$
T= \inf\{t\geq 0: X_t\notin [0,\infty)^d \},
$$
which is the first exit time of $X$ from a non-negative quadrant, that is, $T$ is the first time when at least one company gets ruined.
Assume  the net profit condition
$\lambda \Ee U^{(k)}<1$ ($k=1,2,\ldots, d$) for $k$th coordinate  $U^{(k)}$ of the  generic claim size $U_1$.
Then from the compensation formula given in \cite[Thm. 3.4, p. 18]{AndreasGerber} (see also \cite[Eq. (5.5), p. 42]{AndreasGerber})
it follows that
$$
\Pp^x(\tau<\infty)=u(x)=\Ee^x \int_0^\infty l(X_s^\sharp)ds
$$
with $x=(x_1,\ldots, x_d)\in \rdp$ and
\begin{equation}\label{GS2}
l(x)=\lambda \int_{[x, \infty)}F(dz)=\lambda \overline{F}(x),
\end{equation}
where $F$ is the  claim size distribution.
In fact,  a  more general Gerber-Shiu function
\begin{equation}\label{GS}
u(x)=\Ee^x [e^{-q\tau}w(X_{T-}, |X_T|), \tau<\infty]
\end{equation}
can be represented as a potential function with
$$
l(z)=\lambda\int_z^\infty w(z,u-z)F(du);
$$
see \cite{Feng}.
The so-called penalty function $w$  in \eqref{GS} is applied to the deficit $X_T$ at the ruin moment and position $X_{T-}$ prior to the ruin time.

If $d=1$, then by \eqref{ruinell} and \eqref{GS2}
we recover heavy-tailed asymptotic of $u$ from Example \ref{risk}.

If $d=2$ (we have two companies) then using similar arguments to those in  Example \ref{risk}
for $v(x)=1-u(x)$ and $x=(x_1, x_2)\in \mathbb{R}^2_+$ we get
\begin{equation}\label{v10b}
v(x)= \int_0^\infty \lambda e^{-\lambda t} \int_{y_1\leq x_1+a_1 t, y_2\leq x_2+a_2 t} v(x+at-y) F(dy) \, dt,
\end{equation}
where $a=(a_1,a_2)$ and $y=(y_1,y_2)$.

Assume now that the claims coming simultaneously to both companies are independent on each other, that is $U_1=(U^{(1)}, U^{(2)})$
and $U^{(k)}$ are independent of each other with the distribution $F_k$ ($k=1,2$). Then equation \eqref{v10b} is equivalent to
\begin{equation*}
v(x)= \int_0^\infty \lambda e^{-\lambda t} \int_{0}^{x_1+a_1 t}\int_0^{x_2+a_2 t} v(x+at-y) F_2(dy_2)\, F_1(dy_1) \, dt.
\end{equation*}
Following Foss et al. \cite{FKP} we can also consider the proportional reinsurance
where the generic claim $U$ is divided into fixed proportion into two companies, that is
$U^{(2)}=\beta Z$ and $U^{(2)}=(1-\beta) Z$ for some random variable with distribution $F_Z$
and $\beta\in (0,1)$. In this case
\begin{equation*}
v(x)= \int_0^\infty \lambda e^{-\lambda t} \int_{0}^{(x_1+a_1 t) \wedge (x_2+a_2 t)} v\left(x+at-(\beta, 1-\beta) z\right) F_Z(dz) \, dt.
\end{equation*}
 Let $a_1>a_2$ and $x_1<x_2$. In this case by \cite[Cor. 2.1 and Cor. 2.2]{FKP}
we have
\[v(x)\sim \int_0^\infty \overline F_Z\left(\min\left\{x_1+\left(\frac{a_1}{\lambda}-\beta \Ee Z \right) t,  x_2+\left(\frac{a_2}{\lambda}-(1-\beta) \Ee Z \right)t \right\}\right)dt\]
as  $x^0\to\infty$ where $Z$ is strong subexponential, that is, $F_Z\in \Ss$ and
\begin{equation*}
  \int_0^b\overline{F}_Z(b-y)\overline{F}_Z(y)\,dy
  \sim 2 \Ee Z\overline{F}_Z(b)\quad\text{as }b \to \infty.
\end{equation*}
{\bf Mathematical finance.}
Other applications of the potential
function \eqref{u10} come from the mathematical finance.
For example, the renewal equation \eqref{ren1} can be used
in  pricing a perpetual put option; see Yin, Zhao \cite[Ex. 4.2]{YZ06}
for details.

The potential function appears in a consumption-investment problem initiated by Merton \cite{Merton} as well.
Consider a very simple model where on the market we have $d$ assets $S_t^i=e^{-X_t^i}$,  $1\leq i\leq d$,
governed by  exponential L\'evy processes $X_t^i$ (possibly depend on each other).
In fact, take
$
X_t=x+W_t-Z_t
$
with $W_t$ being $d$-dimensional Wiener process,
and $Z$ is defined in \eqref{CP}.
Let $(\pi_1,\pi_2,\ldots,\pi_d)$ be the strictly positive proportions of its total wealth that are invested in each of the $d$ stocks.
Then the wealth process equals
$
\sum_{i=1}^d \pi_iS_t^i.
$
Assume that the investor withdraws the proportion $\varpi$ of his funds for consumption.
The discounted utility of consumption
is measured by the function
$$
u(x)=\Ee^x\int_0^\infty e^{-qt} \ell (X_t)dt =\Ee^x\int_0^\infty \ell(X_s^\sharp)ds,
$$
where $q>0$, $T$ is an independent killing time exponentially distributed with parameter $q$
and
$$
\ell(x_1,x_2,\ldots,x_d)=L\left(\varpi\sum_{i=1}^d \pi_i e^{-x_i}\right)
$$
for some utility function $L$; see also \cite{Cadenillas} for details.
We take power utility $L(z)=z^\alpha$ for $\alpha \in (0,1)$
and $z>0$.
Assume that $F\in WS(\rd)$.
Since  $\ell (b x)\leq C\sum_{i=1}^d e^{-\alpha b_i x_i}$
for sufficiently large  constant $C$, we have
$\lim_{x^0\to \infty} \frac{\ell (x)}{\overline{F}( x)} =0$, and since $Y_t$ is a Wiener process,
$\lim_{x^0\to \infty} \frac{\overline{G}_\rho (x)}{\overline{F}(x)} =1$.
Hence by Theorem~\ref{p2} the asymptotic behaviour  of the discounted utility consumption is  $u(x)=o(1)\overline{F}(x)$
as $x^0\to \infty$ (that is, when initial assets prices go to zero).

 We choose only few examples where the subexponential asymptotic can be used
but the set of possible applications is much wider.

\end{document}